%% file: main.tex
\documentclass[11pt,reqno]{amsart}
\usepackage[margin=1in]{geometry}
\usepackage{setspace}
\usepackage{datetime}
\usepackage{mysty}
\usepackage{biblatex}
\usepackage{thmtools}
\usepackage{thm-restate}
\usepackage{svg}

\definecolor{wine}{RGB}{200,60,100}

\title{A Concordance Invariant For Knots from $CFK^\infty$}
\author{Kashti Satish Umare}
\date{\today}
\begin{document}
    \vspace*{-1cm}
    \maketitle
    \setlength{\parindent}{0pt}
    \singlespacing
    \hypersetup{citecolor=wine,linkcolor=wine,urlcolor=wine}
    \tableofcontents

\input{mainfile}

    \newpage
    \printbibliography
\end{document}

%% file: mainfile.tex
\ifdefined\thm\else
  \newtheorem{thm}[thm]{Theorem 2}
\fi

\newtheorem{lemma}{Lemma}
\newtheorem{corollary}{Corollary}
\newtheoremstyle{named}{}{}{\itshape}{}{\bfseries}{.}{.5em}{\thmnote{#3 }#1}

\begin{abstract}
We construct a multi-filtered smooth knot concordance invariant, $\beta(t^1, a, t^2)$, derived from the knot Floer complex $CFK^\infty$. $\beta$ is a three-variable piecewise linear function defined on $(0,2) \times \mathbb{R} \times (0,2)$. We demonstrate that $\beta$ satisfies a subadditivity relation. We apply $\beta$ to prove concordance results for a certain family of L-space knots. We show that the positive cones spanned by Teragaito's knots $K_{1,n}$ and $K_{2,n}$ are disjoint in the smooth concordance group $\mathcal{C}$. For positive L-space knots, we show $\beta$ completely determines the Alexander polynomial and distinguishes knots with identical $\Upsilon$ invariants. 
\end{abstract}

\section{Introduction}

This paper aims to construct a smooth concordance invariant $\beta(t^1,a,t^2)$, which is a function of three variables, $t^1,t^2 \in(0,2)$ and $a \in \mathbb{R}$, and is piecewise linear in each variable. This work is inspired by Kim and Livingston's "secondary upsilon invariant" $\Upsilon^2$ \cite{KimLivingston}, which is a set of piecewise linear functions on the interval $(0,2)$. The number of piecewise linear functions that describe $\Upsilon^2$ is based on the singularities of Ozsváth, Stipsicz, and Szabó's "upsilon invariant" $\Upsilon$, which is a homomorphism from the space of knots to the set of piecewise linear functions on the interval $[0,2]$ \cite{upsilonozsvath}.

 As an application of $\beta$, we use it to prove the following result about the smooth concordance group $\mathcal{C}$ \cite{livingston2004surveyclassicalknotconcordance}. Recall Teragaito \cite{Teragaito2025Hyperbolic} constructs an infinite family of hyperbolic L-space knots which share the same $\Upsilon$ but have different Alexander polynomials. They are defined using the diagram in Figure \ref{fig:teresknots}. 

\begin{figure}
    \centering
    \includegraphics[width=0.7\linewidth]{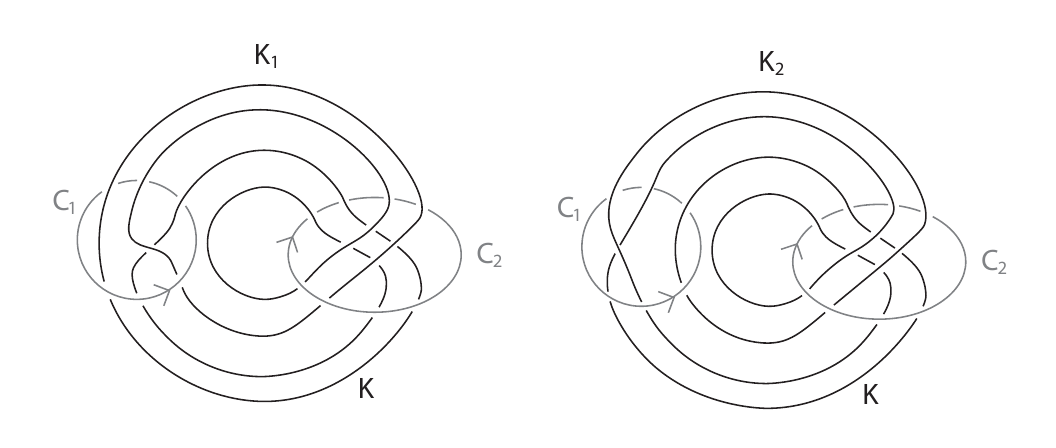}
    \caption{ The knots $K_1$ and $K_2$ are images of $K$ after performing $(-1/n)$ surgery on $C_2$ and $(-1/2)$ surgery on $C_1$. $K_{1,n}$ refers to the knots of the type on the left and $K_{2,n}$ refers to knots of the type on the right.}
    \label{fig:teresknots}
\end{figure}

\begin{restatable}{thm}{teragaitoCone}
\label{thm:teragaito}
    The positive cone in $\mathcal{C}$ spanned by $K_{1,n}$ is disjoint from the positive cone spanned by $K_{2,n}.$ That is, for any knots of the type $K_{1,n}$ and $K_{2,n}$, there is no nontrivial expression of the form $$\#_{n}c_n K_{1,n} \sim \#_{m}d_mK_{2,m}$$ for $c_n,d_m \in \mathbb{Z}_{\geq0}$, where $\sim$ denotes knot concordance. 
\end{restatable}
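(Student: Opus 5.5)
The plan is to use $\beta$ as an obstruction, together with the subadditivity relation and the fact that $\beta$ is a smooth concordance invariant. Suppose, for contradiction, that there is a nontrivial relation $\#_n c_n K_{1,n} \sim \#_m d_m K_{2,m}$ with $c_n,d_m\ge 0$. Since every $K_{i,n}$ is a positive L-space knot with $\tau>0$, and $\tau$ is additive, comparing $\tau$ of the two sides shows that both sides must be nontrivial. Teragaito's knots all share the same $\Upsilon$. Because $\Upsilon$ is a homomorphism, it cannot separate the two sides unless the total multiplicities differ, and even then only up to scaling. So the real distinction has to come from $\beta$.

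\textbf{Step 1: computing $\beta$.} Each $K_{i,n}$ is an L-space knot, so $CFK^\infty(K_{i,n})$ is a staircase determined by its Alexander polynomial. I will write down the staircases from Teragaito's Alexander polynomials. The two families differ only in a local feature of the staircase, for example the lengths of a pair of steps near a given slope. I will then compute $\beta(t^1,a,t^2)$ on a region of $(0,2)\times\mathbb{R}\times(0,2)$ adapted to the singularity of $\Upsilon$ where the two staircases first differ. The target is an explicit value or slope of $\beta$ in that region that takes one sign, or one value, for every $K_{1,n}$ and a different one for every $K_{2,m}$.

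\textbf{Step 2: controlling connected sums.} Subadditivity gives an inequality $\beta_{K\#J}\le \beta_K+\beta_J$ (suitably shifted in the $a$-variable). For sums of staircases, I expect the tensor product to stay a staircase-like complex in the relevant range, so that the extremal generators realizing $\beta$ for the sum are tensor products of extremal generators. I plan to use that to upgrade subadditivity to an exact formula, or at least a two-sided bound, for the specific region chosen in Step 1. The consequence should be that on the left-hand sum the quantity from Step 1 is $\ge$ some linear function of $\sum c_n$, while on the right-hand sum it is $\le$ a different linear function of $\sum d_m$. I will use $\tau$ or $\Upsilon$ additivity to tie $\sum c_n$ to $\sum d_m$ (equal $\Upsilon$ forces equal total weight). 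The two bounds then become incompatible, which contradicts concordance invariance.

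\textbf{Main obstacle.} The hard part is Step 2: subadditivity alone gives only one inequality, and I need control in the opposite direction on one of the two sides. My first approach is to apply subadditivity to the concordance rearranged as $\#_n c_nK_{1,n}\#\,\#_m d_m\overline{K_{2,m}}$ being slice. I would then use the value of $\beta$ on mirror staircases (via duality, where $\beta$ of the mirror is expressed through $\beta$ of the knot) to get the reverse inequality. If that fails, the fallback is to compute $\beta$ of tensor products of staircases directly in the chosen region.
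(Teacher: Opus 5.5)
There is a genuine gap: the decisive step is left as a placeholder. Step 1 posits some value or slope of $\beta$ that separates every $K_{1,n}$ from every $K_{2,m}$. Step 2 posits bounds linear in $\sum c_n$ and $\sum d_m$. But you never say what is to be computed, and the actual discrepancy has neither form.

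The missing idea is where and how the two families differ. The staircases of $K_{1,n}$ and $K_{2,n}$ differ exactly along the $f_1$-minimizing face, which is the singularity of $\Upsilon$ at $t=1$. There, consecutive cycles of $K_{1,n}$ differ by $(2,-2)$ and those of $K_{2,n}$ by $(1,-1)$. The paper takes $t^1=\tfrac12$, $t^2=\tfrac32$. Since $f_1=\tfrac12(f_{1/2}+f_{3/2})$ and $f_{1/2}-f_{3/2}=\tfrac12(i-j)$, the min--max formula for staircases (extended to tensor products of staircases as in the connected-sum formula) gives $\beta_C(\tfrac12,a,\tfrac32)\ge M_C-\tfrac a2$. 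Equality holds exactly when $a=\tfrac12(i-j)$ for some $f_1$-minimizing cycle vertex $(i,j)$.

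In a connected sum these vertices are sums of $f_1$-minimizing vertices of the summands. So for $\#_n c_nK_{1,n}$ every such $i-j$ lies in a single class mod $4$. By contrast, $\#_m d_mK_{2,m}$ also realizes $p+(1,-1)$, where $p$ is the extreme vertex of the face. At $a=a_p+1$ the two values of $\beta$ therefore differ, by $\tfrac12$. This is a spacing phenomenon detected at a parameter $a$ that depends on the whole sum. It is not a per-knot sign and not linear in the multiplicities, so the inequality scheme of Step 2 would not locate it.

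Two further corrections. First, your ``main obstacle'' is not the real one. Concordance invariance already gives $\beta_{K^{(1)}}\equiv\beta_{K^{(2)}}$. For sums of positive staircases $\beta$ is computed exactly: a grading-zero cycle representing the generator can be replaced, without raising either filtration, by a sum of elementary tensors of cycle generators. So no reverse inequality is needed, and your fallback is the actual route. The mirror route would not work as stated. $\beta$ is not a homomorphism, and nothing expresses $\beta_{\overline K}$ through $\beta_K$. Under duality, $\beta_{\overline K}$ corresponds to the quotient of $CFK^\infty(K)$ by a union, not an intersection, of half-planes, which is a different quantity.

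Second, Teragaito's knots do not all share the same $\Upsilon$. Only $K_{1,n}$ and $K_{2,n}$ with the same $n$ do, and $\Upsilon_{K_{i,n}}$ is affine in $n$. What $\Upsilon$ actually gives you is $\sum_n c_n=\sum_m d_m$ and $\sum_n n\,c_n=\sum_m m\,d_m$. That is precisely what makes $M$ and the extreme vertex $p$ of the $f_1$-face coincide for the two sums, so that the comparison at $a=a_p+1$ is meaningful.
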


\textbf{Acknowledgments:}
I would like to thank Peter Ozsváth for his support and mentorship. I would also like to thank Jen Hom for valuable comments. 

\textbf{AI Declaration:}
AI was not used to generate any of the mathematical content in this paper. AI was used for minor grammatical and typesetting assistance.

%\begin{restatable}{thm}{pretzelSum}
%\label{thm:pretzel}
%We have from \cite{lidman2016pretzel}, that pretzel knots of the form $P(-2,3,n)$ for $n\geq 7$ odd are L-space knots. If $\#_n c_nK_n \sim \#_m d_mJ_m$ where $K_n$ and $J_m$ are pretzel knots, then $\sum_n c_n = \sum_n d_n$ for $c_n,d_n \geq 0$.
%\end{restatable} This result is inspired by \cite{Alfieri2019Upsilon}

\section{Background}

The knot Floer complex $CFK^-(K)$ is a free module over $\mathbb{F}[U]$, where $\mathbb{F}$ is the field with two elements (this is the complex $CFK^{-,*}$ from \cite{ozsvath2004holomorphic} and this is $\mathcal{GC}^{-}$ from \cite{GridHom}). This complex is equipped with a Maslov grading $M$ and an Alexander filtration $A$. The Maslov grading $M$ is a homological grading that arises from the Maslov index of holomorphic discs, and the Alexander filtration tracks how a generator interacts with a Seifert surface of $K$. For more information on these definitions, see \cite{ozsvath2017overviewknotfloerhomology}. Multiplication by $U$ performs the shifts $M(U\cdot x) = M(x)-2$ and $A(U \cdot x) = A(x)-1$. The differential of the chain complex satisfies $M(\partial x) = M(x)-1$ and $A (\partial x) \leq A(x)$. The filtered chain homotopy type of $CFK^-$ is an invariant of the knot $K$. 

We next define the $\mathbb{F}[U]$-module $HFK^-(K)$ as the homology of the associated graded object of $CFK^-(K)$ with respect to the Alexander filtration. Concretely, we get $$HFK^-(K) = \bigoplus_{j \in \mathbb{Z}} HFK^-(K,j), $$ where $HFK^-(K,j)$ is the homology of the associated graded piece at Alexander filtration level $j$. An element $x \in HFK^-(K)$ is called torsion if $U^n \cdot x =0$ for some $n \geq 1$ and non-torsion if $U^n \cdot x \neq0$ for all $n \geq 1$.

$CFK^\infty(K)$ is a $\mathbb{F}[U, U^{-1}]$-module given by formally inverting $U$ and adding an algebraic filtration to $CFK^-(K)$. The algebraic filtration is given by $\mathcal{F}_{\text{alg}}^i = \{ U^n x | n \geq -i \}$, where $x$ is in the basis, up to indexing conventions. Thus $F_{\text{alg}}^0 = CFK^- (K)$.

%To each $CFK^\infty(K)$ we can associate a finite dimensional complex $C_0$ such that $CFK^\infty (K) = C_0 \otimes \mathbb{F}[U, U^{-1}]$.  

Graphically, we represent $CFK^\infty$ as follows: It is drawn on a grid using vertices and arrows. Each vertex represents a generator (over $\mathbb{F}{[U,U^{-1}]}$) of $CFK^\infty (K)$. The coordinate ${i,j}$ represents the algebraic and Alexander filtration levels of the generator. Arrows represent components of the boundary. An example of the $CFK^\infty$ diagram for $T_{2,5}$ is provided in Figure \ref{fig:2,5}. The filled dots represent vertices in Maslov grading 0, called cycles, and the unfilled vertices are in Maslov grading 1. 

\begin{figure}
    \centering
    \includegraphics[width=0.35\linewidth]{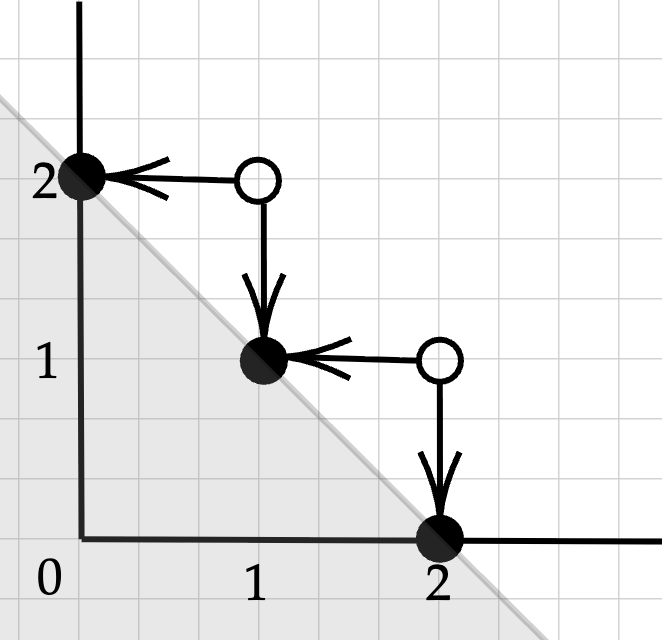}
    \caption{$CFK^\infty (T_{2,5})$}
    \label{fig:2,5}
\end{figure}

%To consider the full complex, we must take all integer-valued diagonal translations of the complex. To do this, we note that the action of $U$ shifts the vertices one unit down and one to the left. Thus, we can conceive of the diagram as a finite-dimensional illustration of the complex $C_0$, which is finite-dimensional and $CFK^\infty (K) = C_0 \otimes \mathbb{F}[U, U^{-1}]$ graded and filtered so that action by $U$ lowers gradings by 2 and filtration levels by 1.

%The minimal algebraic filtration level of representatives of the nontrivial homology class is 0, and likewise for the Alexander filtration level. Therefore if our schematic is connected, the schematic will determine the grading level of all the vertices. While the homology given by $H_*(C)$ can capture data such as the $\tau$ invariant, we attempt to find more differentiating information.

We now give background on the Ozsváth-Szabó $\Upsilon$ \cite{upsilonozsvath} and the Kim-Livingston $\Upsilon^2$ \cite{KimLivingston} invariants. In \cite{KimLivingston}, the authors construct the secondary upsilon invariant $\Upsilon^2$, which is a concordance invariant. This extends the paper \cite{upsilonozsvath}, which constructs a concordance homomorphism $\Upsilon$.

For $t \in (0,2)$ we define the filtration $F_t (i,j) = (1-t/2)i+(t/2)j$. Here $i$ represents the algebraic filtration level, and $j$ represents the Alexander filtration level. We define $F_{s,t}(C)$ to be the set of all elements with filtration levels less than or equal to $s$ where $C$ is a bifiltered chain complex.  When we have a knot $K$, we write $F_{s,t}(K)$ to mean $F_{s,t}(CFK^\infty (K))$.

First, we define $\Upsilon$. Let $$\gamma_K(t) = \min\{s| H_0(F_{s,t}) \to H_0(CFK^\infty (K)) \cong \mathbb{F} \text{ is surjective} \}.$$ Then $\Upsilon_K(t) = -2 \gamma_K(t)$. \cite{upsilonozsvath} shows that $\Upsilon$ provides a homomorphism from the concordance group to the space of piecewise linear functions on the interval $[0,2]$. 

To define $\Upsilon^2$, we first define the support line $\mathcal{L}_{s,t}$.  Note that $F_{s,t}$ is represented by a half-space which has a boundary line with slope $m = 1-2/t$. This is demonstrated by the gray diagonal line in Figure \ref{fig:2,5}. Given the bounding line, define $s = tj_0/2$, where $j_0$ is the j-intercept of the line. This line is $\mathcal{L}_{s,t}$. 

Now let $\mathcal{P}$ be the set of bifiltration levels of elements of $CFK^\infty(K)$. Note that for $t\in (0,2)$, $2 \gamma_K(t) = -\Upsilon_K(t)$. Thus the line $\mathcal{L}_{\gamma(t),t}$ contains a nonempty subset of $\mathcal{P}$, which we call $\mathcal{P}(t)$. For small $\delta$, $\mathcal{P}_{t-\delta}$ has exactly one element of $\mathcal{P}_t$, and likewise for $\mathcal{P}_{t+\delta}$. We call these the negative and positive pivot points $p^-_t$ and $p^+_t$. 

Now let $t^\pm = t\pm \delta$. We let $\mathcal{Z}^\pm = \{ z_j^\pm\}$ be the set of cycles in $F_{\gamma_K(t^\pm),t^\pm}$ which represent nontrivial elements of $H_0(CFK^\infty(K))$. For all $s \in (0,2)$, define $\gamma^2_{K,t}(s)$ to be the minimum value $r$ such that some $z_i^-$ and $z_i^+$ represent the homology class of $H_0(F_{t,\gamma_k(t)}+ F_{s,r})$. Finally, $\Upsilon^2_{K,t}(s) = -2\gamma^2_{K,t}(s) - \Upsilon_{K,t} = -2(\gamma^2_{K,t}(s) -\gamma_K(t)) $. While $\Upsilon^2$ is not a homomorphism, it is in fact a concordance invariant, as proven in \cite{KimLivingston}.

As an example, we compute upsilon and secondary upsilon for $T_{2,5}$, whose $CFK^\infty$ schematic is given in Figure \ref{fig:2,5}. Here, the filtration vertex which generates the minimal $s$ from time $0$ to $1$ is $(0,2)$, and from $1$ to $2$ it is $(2,0)$. Thus 
\[
\gamma(t) = \begin{cases} 
      t & 0\leq t \leq 1 \\
2-t &  1< t \leq 2
   \end{cases}
\]

which implies 

\[
\Upsilon(t) = \begin{cases} 
      -2t & 0\leq t \leq 1 \\
-4+2t &  1< t \leq 2
   \end{cases}
\]

For $\Upsilon^2$, we simply need to compute $\gamma^2_{K,t}(s)$. Here, the point where the two vertices represent the same homology class is $t=1$. Thus we compute $\gamma^2_{K,1}(s)$.  We compute that for $s \in (0,2)$ the line through (1,2) and $(2,1)$ gives $$r = (s/2)(2) + (1-s/2)(1) = (s/2)(1) + (1-s/2)(2)  = 1+s/2 = \gamma_{K,1}^2(s) $$

at $t=1$, $\gamma = 1$.  Thus $$\Upsilon^2_{k,1}(s) = -2(1+s/2-1) = -s.$$

\section{Definition and Properties of $\beta$}

\begin{defn}
    Fix real numbers $t \in (0,2)$ and $s \in \mathbb{R}$. Let $F_{s,t}(K) \subset CFK^\infty (K)$ be the subcomplex spanned by generators $U^i$ satisfying $A(U^i) = j$ and $(1-\frac{t}{2})\cdot i + \frac{t}{2}\cdot j \leq s$. Then for a real number $a$ and two directional parameters $t^1,t^2$, we define the multi-filtered invariant $$\beta_K(t^1,a,t^2) = \min\{s^2 \in \mathbb{R}| \iota_*:  H_0(F_{s^2+a,t^1}(K) \cap F_{s^2,t^2}(K)) \to H_0(CFK^\infty (K)) \cong \mathbb{F} \text{ is surjective} \}$$
\end{defn}

We note that $\beta$ is related to Alfieri's $\Upsilon^C$ \cite{Alfieri2019Upsilon}. 

First, we must prove that $\beta$ is in fact a concordance invariant. 

\begin{lemma} \label{thm:concordance invaraint}
    $\beta$ is a smooth concordance invariant.
\end{lemma}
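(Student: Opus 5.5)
The plan is the standard argument for $\Upsilon$-type invariants from \cite{upsilonozsvath}. It rests on the fact, due to Hom and to Zemke's cobordism maps, that if $K_0$ and $K_1$ are concordant then there are $\mathbb{Z}\oplus\mathbb{Z}$-filtered, $\mathbb{F}[U,U^{-1}]$-equivariant chain maps
\[
f\colon CFK^\infty(K_0)\to CFK^\infty(K_1),\qquad g\colon CFK^\infty(K_1)\to CFK^\infty(K_0).
\]
These maps preserve the Maslov grading and induce isomorphisms on $H_*(CFK^\infty)$. Equivalently, $CFK^\infty(K_0)\otimes CFK^\infty(K_1)^*\simeq CFK^\infty(U)\oplus A$ with $H_*(A[U^{-1}])=0$. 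I would work with the map formulation. The goal is to show $\beta_{K_1}(t^1,a,t^2)\le \beta_{K_0}(t^1,a,t^2)$; the symmetric argument with $g$ then gives equality.

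The key observation is that the region defining $\beta$ is determined purely by the bifiltration. It is
\[
R(s) = \{(i,j) : (1-\tfrac{t^1}{2})i+\tfrac{t^1}{2}j\le s+a,\ (1-\tfrac{t^2}{2})i+\tfrac{t^2}{2}j\le s\}.
\]
Since $t^1,t^2\in(0,2)$, both linear forms have positive coefficients in $i$ and $j$, so $R(s)$ is a down-closed subset of $\mathbb{R}^2$. A filtered map satisfies $i(f(x))\le i(x)$ and $j(f(x))\le j(x)$ for every basis element $x$, where the filtration of a sum means the componentwise levels of its terms. Hence $f$ carries the subcomplex $F_{s+a,t^1}(K_0)\cap F_{s,t^2}(K_0)$ into $F_{s+a,t^1}(K_1)\cap F_{s,t^2}(K_1)$. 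One point needs care: the intersection of subcomplexes should be taken as the span of generators $U^i x$ whose levels lie in $R(s)$. This is a subcomplex because $\partial$ does not increase either filtration, and by down-closedness it coincides with the set-theoretic intersection.

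Next, set $s=\beta_{K_0}(t^1,a,t^2)$. By definition there is a Maslov-grading-$0$ cycle $z$ in the $K_0$-subcomplex whose class generates $H_0(CFK^\infty(K_0))\cong\mathbb{F}$. The image $f(z)$ lies in the corresponding $K_1$-subcomplex and, since $f_*$ is an isomorphism preserving grading, represents the generator of $H_0(CFK^\infty(K_1))$. The diagram
\[
H_0(\text{sub}_{K_0})\to H_0(\text{sub}_{K_1})\to H_0(CFK^\infty(K_1))
\]
commutes with $\iota_*$ and $f_*$, so surjectivity holds for $K_1$ at level $s$, giving $\beta_{K_1}\le s$. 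I should also note that the minimum in the definition is attained and finite. It is finite because the complex is finitely generated over $\mathbb{F}[U,U^{-1}]$, and $U$-translates make the region large enough. It is attained because the set of levels at which surjectivity holds is upward closed and the filtration values change only at finitely many thresholds modulo $U$, so it is closed.

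The main obstacle is justifying the existence of the bifiltered, degree-preserving maps $f,g$ inducing isomorphisms on $H_*(CFK^\infty)$. I would cite this (Zemke's concordance maps, or the stable equivalence of $CFK^\infty$ under concordance used in \cite{upsilonozsvath} and \cite{KimLivingston}) rather than reprove it. Everything after that is the formal monotonicity of down-closed regions.
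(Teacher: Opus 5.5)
Your proof is correct and is essentially the paper's argument. The paper also observes that a bifiltered chain map cannot increase $(1-\frac{t}{2})i+\frac{t}{2}j$ for $t\in(0,2)$, so it carries $F_{s+a,t^1}\cap F_{s,t^2}$ into its counterpart, and then chases the commutative square on $H_0$. The only difference is packaging: the paper feeds in Hom's stable equivalence $CFK^\infty(K_1)\oplus A_1\simeq CFK^\infty(K_2)\oplus A_2$ and handles the homotopy equivalence and the acyclic summands as separate steps, whereas your local-equivalence maps $f,g$ absorb both at once and make explicit the two inequalities that the paper's one-map ``if and only if'' tacitly obtains from the filtered homotopy inverse.
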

\begin{proof} 

We proceed using Theorem 1 from \cite{hom2017survey}. Hom's Theorem 1 states that if two knots $K_1$ and $K_2$ are smoothly concordant, then their knot Floer complexes must be related by a filtered chain homotopy equivalence modulo acyclic summands $A_1$ and $A_2$. That is $$CFK^\infty(K_1) \oplus A_1 \simeq CFK^\infty(K_2) \oplus A_2.$$ 

\begin{figure}
    \centering
    \includegraphics[width=0.35\linewidth]{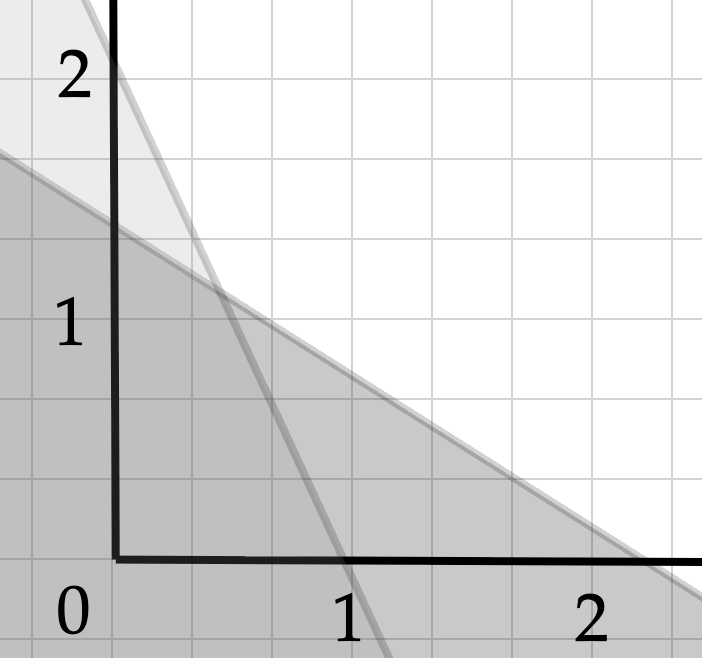}
    \caption{$D_C(s)$}
    \label{fig:example}
\end{figure}

For a bifiltered complex $C$, we fix $a,s,t^1,$ and $t^2$ and set $$D_C(s) = F_{s+a,t^1}(C) \cap F_{s,t^2}(C).$$ This is shown in \ref{fig:example}, where each line and the half plane it bounds corresponds to one of the filtration thresholds and we consider their intersection. Now suppose $C \simeq C'$ is a bifiltered chain homotopy equivalence. Any filtered chain map $\phi: C \to C'$ must satisfy $\phi(D_C(s)) \subseteq D_{C'}(s)$ since $\phi$ does not increase either of the two filtrations and thus does not increase $f_t(x) = (1-\frac{t}{2})i(x) + \frac{t}{2}j(x)$ for any $t \in (0,2)$. This means $\phi$ induces a commutative diagram on $H_0$. 

\[\begin{tikzcd}
	{H_0(D_C(s))} && {H_0(C)} \\
	\\
	{H_0(D_{C'}(s))} && {H_0(C')}
	\arrow[from=1-1, to=1-3]
	\arrow[from=1-1, to=3-1]
	\arrow[from=1-3, to=3-3]
	\arrow[from=3-1, to=3-3]
\end{tikzcd}\]

Because $\phi_*:H_0(C) \to H_0(C')$ is an isomorphism, the map on homology induced by inclusion is $H_0(D_C(s)) \to H_0(C)$ and it is surjective if and only if $H_0(D_{C'}(s)) \to H_0(C')$ is also surjective. Thus we have $$\beta(t^1,a,t^2) = \beta_{C'}(t^1,a,t^2).$$ 

Now suppose $A$ is an acyclic chain complex. That means $$D_{C \oplus A}(s) = D_C(s) \oplus D_A(s)$$ with $H_0(C \oplus A) \cong H_0(C)$ because $H_0(A) =0$. This means the map induced by the inclusion map for $C \oplus A$ is surjective exactly when the map which corresponds to $C$ is surjective. Thus $\beta(C \oplus A) = \beta(C)$. Then by Theorem 1 of Hom, $\beta_{K_1} = \beta_{K_1 \oplus {A_1}}=\beta_{K_2 \oplus {A_2}} = \beta_{K_2}$.

\end{proof}

\begin{lemma}
    If $X \sim 0$ in $C$ meaning that $X$ is smoothly slice, then $\beta_X(t^1,a,t^2) = 0$ for all $t^1,t^2 \in (0,2)$, $a \geq 0$.
\end{lemma}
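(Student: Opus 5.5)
By Lemma \ref{thm:concordance invaraint}, $\beta$ is a concordance invariant. So if $X$ is slice, then $\beta_X=\beta_U$, where $U$ is the unknot, and it suffices to compute $\beta$ for the unknot. The plan is to use the model $CFK^\infty(U)\cong \mathbb{F}[U,U^{-1}]$, generated by a single cycle $x_0$ in Maslov grading $0$ at bifiltration level $(0,0)$. The elements $U^n x_0$ then sit at levels $(-n,-n)$ in Maslov grading $-2n$, and there are no differentials. In particular, $H_0(CFK^\infty(U))\cong\mathbb{F}$ is generated by $[x_0]$, and $x_0$ is the only element of Maslov grading $0$ in the complex.

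The key steps are as follows.

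\emph{Step 1.} Since there are no differentials and $x_0$ is the unique nonzero element in grading $0$, the inclusion
\[
H_0(F_{s+a,t^1}\cap F_{s,t^2}) \to H_0(CFK^\infty(U))
\]
is surjective exactly when $x_0$ lies in the intersection.

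\emph{Step 2.} Evaluate both filtration functions at $x_0$. For every $t\in(0,2)$, the functional $(1-\tfrac t2)i+\tfrac t2 j$ takes the value $0$ at $(0,0)$. Hence $x_0\in F_{s,t^2}$ if and only if $s\ge 0$, and $x_0\in F_{s+a,t^1}$ if and only if $s+a\ge 0$.

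\emph{Step 3.} The minimal $s$ satisfying both conditions is $\max(0,-a)$. When $a\ge0$ this equals $0$, which gives $\beta_X(t^1,a,t^2)=0$. This step is where the hypothesis $a\ge 0$ is used: for $a<0$ one would instead get $-a$.

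The main point needing care is the reduction to the model complex. Lemma \ref{thm:concordance invaraint} is phrased via Hom's theorem, namely $CFK^\infty(X)\oplus A_1\simeq CFK^\infty(U)\oplus A_2$. Its proof already shows that $\beta$ is unchanged by filtered homotopy equivalence and by adding acyclic summands. Therefore I would simply cite the lemma to conclude $\beta_X=\beta_U$, and then carry out the computation above for the unknot.
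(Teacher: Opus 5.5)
Your proposal is correct and follows essentially the same route as the paper: reduce to the unknot via Lemma \ref{thm:concordance invaraint}, then note that the single grading-$0$ generator $x_0$ at $(0,0)$ has $f_t(x_0)=0$ for every $t$, so the minimal threshold is $\max(0,-a)$, which is $0$ when $a\ge 0$. Your explicit remark that $x_0$ is the only nonzero element in Maslov grading $0$, which justifies the criterion in your Step 1, is a small point the paper leaves implicit.
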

\begin{proof}
    By Lemma \ref{thm:concordance invaraint}, $\beta_X(t^1,a,t^2) = \beta_U(t^1,a,t^2)$ where $U$ is the unknot. The reduced knot Floer complex for the unknot has a single generator $x_0$ in Maslov grading 0 supported at the origin $(i,j) = (0,0)$ with vanishing differential $\partial = 0$. For any directional parameter $t \in (0,2)$, the regional filtration of the generator $x_0$ is $$f_t(x_0) = (1-\frac{t}{2})\cdot 0 + \frac{t}{2}\cdot 0 = 0$$ meaning $x_0 \in F_{s,t}(U)$ if and only if $0 \leq s$. 

    By definition 3.1, $s^2$ is the minimal threshold such that $[x_0]$ lies in the image of the inclusion map $$\iota_* : H_0\left(F_{s^2+a, t^1}(U) \cap F_{s^2, t^2}(U)\right) \to H_0\left(CFK^\infty(U)\right).$$ This means $x_0 \in F_{s^2+a, t^1}(U) \cap F_{s^2, t^2}(U)$ so $$f_{t^1}(x_0) \leq s^2 + a \implies 0 \le s^2 + a \implies s^2 \geq -a$$ and $$f_{t^2}(x_0) \leq s^2 \implies 0 \leq s^2.$$

    By combining both inequalities we get $s^2 \geq \max(0,-a)$. Thus for any $a \geq 0,$ the constraint $s^2 \geq 0$ dominates, meaning that $\beta_X(t^1,a,t^2) = 0$ for all $t^1,t^2 > 0$.
\end{proof}

Next, we prove a subadditivity property of $\beta$.

\begin{lemma}
    For two knots $K_1,K_2$ given any $t^1,t^2 
    \in (0,2)$ and shifts $a_1,a_2 \in \mathbb{R}$, $\beta$ satisfies $$\beta_{K_1 \# K_2} (t^1,a_1+a_2,t^2) \leq \beta_{K_1}(t^1,a_1,t^2) + \beta_{K_2}(t^1,a_2,t^2)$$
\end{lemma}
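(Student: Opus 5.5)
The approach is the standard tensor-product argument used for subadditivity of $\Upsilon$. It rests on the Künneth-type identification
$$CFK^\infty(K_1\# K_2)\simeq CFK^\infty(K_1)\otimes_{\mathbb{F}[U,U^{-1}]} CFK^\infty(K_2),$$
a bifiltered chain homotopy equivalence. Under it, the bifiltration levels add: for generators $x\in CFK^\infty(K_1)$ and $y\in CFK^\infty(K_2)$ we have $i(x\otimes y)=i(x)+i(y)$ and $j(x\otimes y)=j(x)+j(y)$. The Maslov gradings also add. By Lemma \ref{thm:concordance invaraint}, more precisely by its first half, which shows that $\beta$ depends only on the bifiltered homotopy type, it suffices to prove the inequality for the tensor product complex $C_1\otimes C_2$.

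Set $b_k=\beta_{K_k}(t^1,a_k,t^2)$ for $k=1,2$. By definition of the minimum, and because the set of admissible thresholds is closed (the filtration values of generators form a discrete set in each direction), there is a Maslov grading $0$ cycle $z_k$ with the following two properties:
\begin{itemize}
\item it lies in $F_{b_k+a_k,t^1}(C_k)\cap F_{b_k,t^2}(C_k)$;
\item it represents the generator of $H_0(C_k)\cong\mathbb{F}$.
\end{itemize}
Consider $z_1\otimes z_2$. It is a cycle of Maslov grading $0$, and by the Künneth theorem over the field $\mathbb{F}$ (working in $H_*(C_1\otimes C_2)\cong H_*(C_1)\otimes H_*(C_2)$ over $\mathbb{F}[U,U^{-1}]$), its class $[z_1]\otimes[z_2]$ generates $H_0(C_1\otimes C_2)\cong\mathbb{F}$.

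The key step is the filtration estimate. For fixed $t$, the function $f_t(i,j)=(1-\frac t2)i+\frac t2 j$ is linear, so on a pure tensor of generators we get $f_t(x\otimes y)=f_t(x)+f_t(y)$. For sums, $f_t$ of an element is the maximum over its generator components. Expanding $z_1\otimes z_2=\sum x_\alpha\otimes y_\beta$ gives
$$f_{t}(z_1\otimes z_2)\le \max_\alpha f_t(x_\alpha)+\max_\beta f_t(y_\beta)=f_t(z_1)+f_t(z_2).$$
Applying this with $t=t^1$ yields $f_{t^1}(z_1\otimes z_2)\le (b_1+b_2)+(a_1+a_2)$. Applying it with $t=t^2$ yields $f_{t^2}(z_1\otimes z_2)\le b_1+b_2$. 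Hence $z_1\otimes z_2\in F_{(b_1+b_2)+(a_1+a_2),t^1}\cap F_{b_1+b_2,t^2}$, which shows that $s^2=b_1+b_2$ is an admissible threshold for $\beta_{K_1\#K_2}(t^1,a_1+a_2,t^2)$. The desired inequality follows.

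The main obstacle is making the setup rigorous rather than the estimate itself. Two points need care. First, the tensor product over $\mathbb{F}[U,U^{-1}]$ must be handled correctly, including the fact that $U$ shifts both filtrations by the same amount, so that $f_t$ is well defined on $U$-translates of generators. Second, the model of $CFK^\infty(K_1\#K_2)$ is only determined up to bifiltered homotopy equivalence, and this is exactly where the invariance from Lemma \ref{thm:concordance invaraint} is used. A smaller point is verifying that the minimum in the definition of $\beta$ is attained, so that the cycles $z_k$ exist at level exactly $b_k$. If attainment were in doubt, I would instead take thresholds $b_k+\epsilon$ and let $\epsilon\to0$.
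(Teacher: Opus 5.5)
Your proposal is correct and follows essentially the same route as the paper: you take Maslov grading $0$ cycles $z_1,z_2$ realizing $\beta_{K_1}(t^1,a_1,t^2)$ and $\beta_{K_2}(t^1,a_2,t^2)$, use the tensor product model and the additivity of $f_t$ on elementary tensors to place $z_1\otimes z_2$ in $F_{(b_1+b_2)+(a_1+a_2),t^1}\cap F_{b_1+b_2,t^2}$, and observe that $[z_1]\otimes[z_2]$ generates $H_0$. Your extra remarks fill in points the paper's proof uses only implicitly. These are that the minimum is attained because the filtration values are discrete, and that $\beta$ is unchanged under the bifiltered homotopy equivalence with the tensor product.
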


\begin{proof}
    The full knot Floer complex of a connected sum is filtered chain homotopy equivalent to the tensor product \cite{ozsvath2004holomorphic} given by $$CFK^\infty (K_1 \# K_2) \cong CFK^\infty(K_1) \otimes_{\mathbb{F}[U,U^{-1}]}CFK^\infty (K_2).$$

    Under this tensor product, the two filtrations are additive on elementary tensors, meaning $(i_1,j_1) \otimes (i_2,j_2)\mapsto (i_1+i_2,j_1+j_2)$. Thus for $t \in (0,2)$ we have $f_t(x_1\otimes x_2) = f_t(x_1) + f_t(x_2)$. Now let us set $\beta_1 = \beta_{K_1} (t^1,a_1,t^2)$ and $\beta_2 = \beta_{K_2} (t^1,a_2,t^2)$. Then by our definition of $\beta$, there exist cycles $z_1 \in CFK^\infty (K_1)$ and $z_2 \in CFK^\infty (K_2)$ which represent the nonzero classes in $H_0(CFK^\infty (K_1))$ and $H_0(CFK^\infty (K_2))$ respectively. This gives us $$z_1 \in F_{\beta_1+a_1,t^1}(K_1) \cap F_{\beta_1,t^2}(K_1)$$ and $$z_2 \in F_{\beta_2+a_2,t^1}(K_2) \cap F_{\beta_2,t^2}.$$

    All elementary tensors in $z_1 \otimes z_2$ have at most $t^1$ filtration $$(\beta_1+a_1)+(\beta_2+a_2) = (\beta_1+\beta_2) + (a_1+a_2)$$ and at most $t_2$ filtration $$\beta_1+\beta_2$$ since $f_t(x_1\otimes x_2) = f_t(x_1) + f_t(x_2)$. Thus we obtain $$z_1 \otimes z_2 \in F_{(\beta_1+\beta_2) + (a_1+a_2),t^1}(K_1 \# K_2)\cap F_{\beta_1+\beta_2}(K_1 \# K_2).$$ Because we have that $[z_1]$ and $[z_2]$ are nonzero in the one dimensional vector spaces $H_0(CFK^\infty(K_1)) \cong \mathbb{F}$ and $H_0(CFK^\infty (K_2)) \cong \mathbb{F}$, their tensor $[z_1] \otimes [z_2]$ is nonzero and represents the generator of $$H_0(CFK^\infty (K_1 \# K_2)) \cong H_0(CFK^\infty(K_1)) \otimes_{\mathbb{F}} H_0(CFK^\infty (K_2)).$$ Thus we have that the inclusion $$H_0(F_{(\beta_1+\beta_2) + (a_1+a_2),t^1}\cap F_{\beta_1+\beta_2,t^2}) \to H_0CFK^\infty (K_1 \# K_2)$$ is surjective. By the definition of $\beta$ we get $$\beta_{K_1 \# K_2}(t^1,a_1+a_2,t^2) \leq \beta_1 + \beta_2.$$ 
    %For any threshold parameters $S$ and directional parameters $t$, we have $$F_{S,t}(K_1 \#K_2) = \bigcup_{s_1+s_2=S}F_{s_1,t}(K_1) \otimes_{\mathbb{F}[U,U^{-1}]}F_{s_2,t}(K_2).$$ Allow $s_1^2 = \beta_{K_1}(t^1,a_1,t^2) $ and $s^2_2 = \beta_{K_2}(t^1,a_2,t^2)$. By definition of the minimum threshold for surjectivity, there exist homology-generating cycles $z_1 \in CFK^\infty (K_1)$ and $z_2 \in CFK^\infty (K_2)$ so $$z_1 \in F_{s_1^2 +a_1,t^1}(K_1) \cap F_{s_1^2,t^2}(K_1) \text{   and   }z_2 \in F_{s_2^2 +a_2,t^1}(K_2) \cap F_{s_2^2,t^2}(K_2). $$

    %Now let us consider an elementary tensor cycle $z_1 \otimes z_2 \in CFK^\infty(K_1\#K_2)$. Then we can distribute regional intersection filters across tensor complexes so $$z_1 \otimes z_2 \in \big(F_{s_1^2 +a_1,t^1}(K_1) \otimes F_{s_2^2+a_2,t^1}(K_2)\big) \cap \big(F_{s_1^2,t^2}(K_1) \otimes F_{s_2^2,t^2}(K_2)\big) . $$ We now sum the corresponding filtration limits to see that $$z_1 \otimes z_2 \in F_{(s_1^2+s_2^2) + (a_1+a_2),t^1}(K_1\#K_2)\cap F_{s_1^2+s_2^2,t^2}(K_1 \# K_2).$$ Because $z_1$ and $z_2$ project to non-torsion $H_0(CFK^\infty(K_1))$ and $H_0(CFK^\infty(K_2))$, their tensor $z_1 \otimes z_2$ generates $H_0(CFK^\infty(K_1 \# K_2))$. Thus the true minimal parameter threshold $\beta_{K_1 \#K_2}(t^1,a_1+a_2,t^2)$ must be bounded above $$\beta_{K_1 \#K_2}(t^1,a_1+a_2,t^2)\leq s_1^2 +s_2^2 = \beta_{K_1}(t^1,a_1,t^2) + \beta_{K_2}(t^1,a_2,t^2).$$
    
\end{proof}

\section{${\beta}$ for L-space knots}

\begin{defn}
    Given a sequence $$S(K) = \{ (i_1,j_1),(i_2,j_2),...,(i_k,j_k) \}\subset \mathbb{Z}_{\geq 0} \times \mathbb{Z}_{\geq 0}$$ with $0=i_1 < i_2 < ... < i_k$ and $j_1 > j_2 >...>j_k=0$, consider the corresponding sequence $$R(K) = \{(i_{n+1},j_{n})| 1 \leq n \leq k-1\},$$ the positive staircase complex is the filtered complex
    \[ \bigoplus_{p \in S(K) \sqcup R(K)} p \cdot \mathbb F[U, U^{-1}] \]

    with a differential $\partial$ defined by $\partial p = 0$ if $p \in S(K)$ and $\partial (i_{n+1},j_{n}) = (i_n,j_n) + (i_{n+1},j_{n+1})$ for $(i_{n+1},j_{n}) \in R(K)$. The two filtrations are given by a projection onto the $i$ and $j$ axes, respectively. The vertices in $S(K)$ have Maslov grading 0 and the vertices in $R(K)$ have Maslov grading 1. An L-space knot is a knot $K$ such that $CFK^{\infty}(K)$ is filtered chain homotopic to a positive staircase complex.

\end{defn}

For a positive staircase complex, it is clear that the cycles in dimension 0 are generated by the points $p \in S(K)$, and they are all homologous. A positive staircase complex can be represented by a diagram such as Figure \ref{fig:2,5}, where the elements of $S$ are represented by black dots and the elements of $R$ are represented by the white dots.

We remark that for any positive L-space knot $K \in \mathcal{K}_L$, $CFK^\infty (K)$ is determined uniquely by its Alexander polynomial,  $\Delta_K(t)$, up to chain homotopy \cite{ozsvath2005knot}\cite{Krcatovich2018}. Because the staircase complex is determined by the cycles (since the vertex between two cycle steps $(i_n,j_n)$ and $(i_{n+1},j_{n+1})$ must be $(i_{n+1},j_n)$), we have $$\Delta_{K_a} (t)\neq \Delta_{K_b}(t) \implies S(K_a) \neq S(K_b).$$

We define a lexicographic order on points in $\mathbb{Z}^2$ $>_{\text{lex}}$ given by $(i_1,j_1) >_\text{lex}(i_2,j_2)$ if and only if $i_1 > i_2$ or $(i_1 = i_2 \text{ and }j_1 >j_2)$. We utilize this to construct a strict total order $\succ$ on $\mathcal{K}_L$. Suppose $K_a,K_b \in \mathcal{K}_L$ with $\Delta_{K_a}(t) \neq \Delta_{K_b}(t)$. Their symmetric difference $S(K_a) \triangle S(K_b)$ must be finite and nonempty. Then define $p^*(K_a,K_b) \in \mathbb{Z}^2$ as the unique maximal point in $S(K_a) \triangle S(K_b)$ under $>_\text{lex}$ written as $$p^*(K_a,K_b) = (i^*,j^*) = \max_{>_\text{lex}} \big( S(K_a) \triangle S(K_b)\big).$$ We then define the binary relation $\succ$ on L-space knots with different Alexander polynomial by $$K_a \succ K_b \iff p^*(K_a,K_b) \in S(K_a).$$

\begin{prop}
    The relation $\succ$ defines a strict total order on any set of L-space knots with pairwise distinct Alexander polynomials.
\end{prop}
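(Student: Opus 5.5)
The proof reduces everything to finite subsets of $\mathbb{Z}^2$. For knots in the given set, pairwise distinct Alexander polynomials give $S(K_a)\neq S(K_b)$ by the remark above, so $S(K_a)\triangle S(K_b)$ is nonempty. It is also finite, since each $S(K)$ is a finite subset of $\mathbb{Z}_{\geq 0}^2$. Because $>_{\text{lex}}$ is a strict total order on $\mathbb{Z}^2$, the maximum $p^*(K_a,K_b)$ exists and is unique, so $\succ$ is well defined. It is also symmetric in the sense that $p^*(K_a,K_b)=p^*(K_b,K_a)$, since symmetric difference is symmetric. I will verify irreflexivity (vacuous, since $\succ$ is only defined between knots with different polynomials), trichotomy/connexity, asymmetry, and transitivity.

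\emph{Totality and asymmetry.} For $K_a\neq K_b$ in the set, $p^*$ lies in exactly one of $S(K_a)$ or $S(K_b)$ by the definition of symmetric difference. Hence exactly one of $K_a\succ K_b$ or $K_b\succ K_a$ holds.

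\emph{Transitivity.} This is the main step. I will avoid case analysis by recasting $\succ$ as a lexicographic comparison of indicator functions. For a finite set $S\subset\mathbb{Z}^2$, let $\chi_S:\mathbb{Z}^2\to\{0,1\}$ be its indicator. Then $K_a\succ K_b$ holds exactly when $\chi_{S(K_a)}$ and $\chi_{S(K_b)}$ first differ, scanning points from the top of $>_{\text{lex}}$ downward, at a point where $\chi_{S(K_a)}=1$.

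A direct argument goes as follows. Suppose $K_a\succ K_b$ with $p=p^*(K_a,K_b)$ and $K_b\succ K_c$ with $q=p^*(K_b,K_c)$, and set $r=\max_{>_{\text{lex}}}(p,q)$. All three sets agree at every point strictly above $r$. There are three cases.
\begin{enumerate}
\item If $p>_{\text{lex}} q$, then $S(K_b)$ and $S(K_c)$ agree at $p$. So $p\in S(K_a)\setminus S(K_c)$, and $p$ is the largest point of disagreement between $S(K_a)$ and $S(K_c)$.
\item If $q>_{\text{lex}} p$, then $S(K_a)$ and $S(K_b)$ agree at $q$. So $q\in S(K_b)=S(K_a)$ there and $q\notin S(K_c)$, which gives $p^*(K_a,K_c)=q\in S(K_a)$.
\item The case $p=q$ is impossible: $p=q$ would force $p\notin S(K_b)$ from the first relation and $p\in S(K_b)$ from the second.
\end{enumerate}
In the surviving cases $S(K_a)\neq S(K_c)$, and $K_a\succ K_c$.

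The only subtle point I expect is ensuring that ``largest point of disagreement'' is meaningful. This needs finiteness of the symmetric difference and the fact that $>_{\text{lex}}$ restricted to a finite set has a maximum. Both are immediate here, since each $S(K)$ is finite.
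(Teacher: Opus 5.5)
Your proof is correct and follows essentially the same route as the paper. It checks irreflexivity, asymmetry, totality and transitivity, and splits transitivity into the cases $p>_{\text{lex}}q$, $q>_{\text{lex}}p$ and the impossible $p=q$, exactly as the paper does. Your remark that all three sets agree at every point strictly above $r=\max_{>_{\text{lex}}}(p,q)$ makes the maximality of the disagreement point cleaner than in the paper's write-up; the indicator-function reformulation is only an aside, not a different argument.
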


\begin{proof}
    We now verify the four axioms of a strict total order. 

    1. Irreflexivity: For any $K$, $S(K) \triangle S(K) = \emptyset$ meaning no maximal $p^*$ so $K \nsucc K$.
    
    2. Asymmetry: Suppose $K_a \succ K_b$. Then $p^* = \max_{>_\text{lex}}(S(K_a) \triangle S(K_b)) \in S(K_a)$. Because $p^* \in S(K_a) \triangle S(K_b)$, we must have $p^* \notin S(K_b)$. This means $p^*(K_b,K_a) = p^* \notin S(K_b)$, meaning that $K_b \nsucc K_a$.

    3. Totality: For any $K_a \neq K_b$ with $\Delta_{K_a} \neq \Delta_{K_b}$, $S(K_a) \neq S(K_b)$ meaning that $S(K_a) \triangle S(K_b) \neq \emptyset$. The set is finite, meaning that $p^*$ uniquely exists. Because $p^* \in S(K_a) \triangle S(K_b)$, $p^*$ must be either in $S(K_a)$ or $S(K_b)$ so either $K_a \succ K_b$ or $K_b \succ K_a$.

    4. Transitivity: Suppose we have $K_a \succ K_b$ and $K_b \succ K_c$. Allow $p_1 = p^*(K_a,K_b)$ and $p_2 = p^*(K_b,K_c)$. We divide this into three cases. Suppose first $p >_\text{lex} p_1$ with $p \notin S(K_a) \triangle S(K_b)$ and $p \notin S(K_b) \triangle S(K_c)$ meaning that $p \notin S(K_a) \triangle S(K_c)$. At $p_1, \text{ } p_1 \in S(K_a)$ and $p_1 \notin S(K_b)$. Since $p_1 >_{\text{lex}}p_2, \text{ }p^1 \notin S(K_b) \triangle S(K_c) $ so $p_1 \notin S(K_c)$. That means $p_1 \in S(K_a) \setminus S(K_c)$. Thus $p^*(K_a,K_c) = p_1 \in S(K_a)$ meaning that $K_a \succ K_c$.
    
    If $p_2 >_\text{lex} p_1$, we must have $p_2 \notin S(K_a) \triangle S(K_b)$. We also note since $K_b \succ K_c$ we have $p_2 \in S(K_b ) \setminus S(K_c)$ meaning $p_2 \in S(K_b)$, and because $p_2 \notin S(K_a) \triangle S(K_b)$, we must have $p_2 \in S(K_a)$ so $p_2 \in S(K_a) \setminus S(K_c)$. Thus $K_a \succ K_c$
    
    Finally, it is impossible for $p_1=p_2$ because $p_1 \notin S(K_b)$ but $p_2 \in S(K_b)$.

    Thus $\succ$ is a strict total order, and this means that every nonempty finite subset of distinct L-space knots must have a unique maximal element under $\succ$.
\end{proof}

\begin{prop} \label{5.1}
    $$\beta_K(t^1, a, t^2) = \min_{1 \le m \le k} \max \Big( f_{t^1}(i_m, j_m) - a, \;\; f_{t^2}(i_m, j_m) \Big)$$
\end{prop}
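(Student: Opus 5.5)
The plan is to compute $\beta_K$ directly from the staircase model. By Lemma \ref{thm:concordance invaraint} (really, by the first half of its proof, invariance under bifiltered chain homotopy equivalence), we may assume $CFK^\infty(K)$ is literally the positive staircase complex built from $S(K)=\{(i_m,j_m)\}$ and $R(K)$. The region $D(s)=F_{s+a,t^1}\cap F_{s,t^2}$ is the set of lattice points $(i,j)$ with $f_{t^1}(i,j)\le s+a$ and $f_{t^2}(i,j)\le s$. Write $M_m=\max\bigl(f_{t^1}(i_m,j_m)-a,\ f_{t^2}(i_m,j_m)\bigr)$. Then the generator $(i_m,j_m)$ lies in $D(s)$ exactly when $s\ge M_m$. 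So the claim is that $H_0(D(s))\to H_0(CFK^\infty)$ is surjective if and only if $s\ge \min_m M_m$.

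\emph{Upper bound.} Suppose $s\ge M_m$ for some $m$. The cycle $(i_m,j_m)$ (as an element of $\mathbb{F}[U,U^{-1}]$-basis times $U^0$) lies in $D(s)$. It is a cycle, since $\partial p=0$ for $p\in S(K)$. As remarked after the definition, all points of $S(K)$ are homologous and represent the generator of $H_0\cong\mathbb{F}$. Hence the inclusion is surjective, and $\beta_K\le M_m$ for every $m$.

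\emph{Lower bound.} This is the main step. Suppose $s<M_m$ for all $m$. We must show that no cycle in $D(s)$ of Maslov grading $0$ is homologous to the generator. Since $f_t$ is linear and $U$ lowers both coordinates by $1$, $D(s)$ is a subcomplex spanned by translates $U^\ell p$. The degree-$0$ chains are combinations of $U^{\ell}(i_m,j_m)$, where Maslov grading $0$ forces the $U$-power to be $0$ (the grading drops by $2$ per $U$). Likewise, degree-$1$ chains are combinations of the points of $R(K)$ with no $U$-power. So the relevant part of the complex is the finite complex $C_0$: $\mathbb{F}\langle R\rangle\to\mathbb{F}\langle S\rangle$, with $\partial r_n = s_n+s_{n+1}$. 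The map $H_0(C_0)\to\mathbb{F}$ is the augmentation $\epsilon$ sending each $s_m\mapsto 1$.

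A degree-$0$ chain $z\in D(s)$ is supported only on those $s_m$ lying in $D(s)$, and by hypothesis there are none. So $z=0$, and $\epsilon(z)=0$. Hence the map $H_0(D(s))\to H_0$ is zero, not surjective, and $\beta_K\ge \min_m M_m$.

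The point I expect to need care is the grading bookkeeping. I must check that in the paper's conventions the Maslov-$0$ part of $CFK^\infty$ of the staircase really is just $\mathbb{F}\langle S(K)\rangle$ with the stated coordinates, i.e.\ that no $U^{\ell}$-translate of a grading-1 or grading-0 generator lands in grading $0$. This follows because the staircase grading is normalized so that $S$ sits in grading $0$ and $R$ in grading $1$, and $U^\ell$ shifts grading by $-2\ell$. With that, the minimum in the definition is attained at $s=\min_m M_m$, giving the formula.
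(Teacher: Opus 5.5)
Your proof is correct and follows essentially the same route as the paper. Both arguments pass to the staircase model and note that Maslov-grading-$0$ chains are $\mathbb{F}$-combinations of the vertices $(i_m,j_m)$, all homologous to the generator. The upper bound then comes from a single vertex, and the lower bound from the fact that any representative lying in $D(s)$ must use some vertex with $s \ge M_m$. Your explicit check that no $U^\ell$-translate lands in grading $0$, together with your remark that the minimum is attained, fills in bookkeeping the paper leaves implicit.
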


\begin{proof}

Let $C$ be a staircase complex. This means $C_0$ is spanned by Maslov grading 0 cycle generators $z_1,...,z_k$. Every Maslov grading 0 cycle can be written in the form $$z = \sum_{m=1}^k c_mz_m.$$ Since $\partial r_m = z_m + z_{m+1}$, a Maslov grading cycle $\sum c_m z_m$ represents a nonzero homology class exactly when we have $\sum_m c_m=1$ in $\mathbb{F}$. Additionally, $$f_t(z) = \max_{c_m \neq 0} f_t(z_m).$$ This means a nonzero cycle in the intersection must have filtration at least $$\min_m \max \{ f_{t^1}(z_m) - a, f_{t^2}(z_m)\}.$$ We get equality by taking the corresponding vertex. Thus the claim follows.
\end{proof}

\begin{lemma} \label{thm:Lemma 4}
    For positive L-space knots $K_1, K_2 \in \mathcal{K}_L$ and parameter $a \in \mathbb{R}$ $$\beta_{K_1 \# K_2}(t^1, a, t^2) = \min_{a_1 + a_2 = a} \left( \beta_{K_1}(t^1, a_1, t^2) + \beta_{K_2}(t^1, a_2, t^2) \right)$$ 
\end{lemma}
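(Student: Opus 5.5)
The inequality $\le$ is immediate from the subadditivity lemma. For every splitting $a=a_1+a_2$ that lemma gives $\beta_{K_1\#K_2}(t^1,a,t^2)\le \beta_{K_1}(t^1,a_1,t^2)+\beta_{K_2}(t^1,a_2,t^2)$, so I take the infimum over splittings. I also need the infimum on the right to be attained. By Proposition \ref{5.1} each summand is a piecewise linear function of $a_i$ built from finitely many vertices, so I will note this and move on. The substance of the proof is the reverse inequality.

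For $\ge$, I will first compute $\beta_{K_1\#K_2}$ in the same form as Proposition \ref{5.1}, with vertex set the Minkowski sum $S(K_1)+S(K_2)$. Write $C_1,C_2$ for the staircases, so $CFK^\infty(K_1\#K_2)\simeq C_1\otimes C_2$, and set $D(s)=F_{s+a,t^1}\cap F_{s,t^2}$. Since both $f_{t^1}$ and $f_{t^2}$ are increasing in $i$ and in $j$, $D(s)$ is down-closed in the $(i,j)$-plane. The claim to prove is:
\[
H_0(D(s))\to H_0(C_1\otimes C_2) \text{ is surjective} \iff p_m+q_n\in D(s)\ \text{for some } p_m\in S(K_1),\ q_n\in S(K_2).
\]
The direction $\Leftarrow$ is clear, since $z_m\otimes w_n$ is a cycle representing the generator. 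For $\Rightarrow$ I will argue by duality. It suffices to produce, whenever $D(s)$ misses every $p_m+q_n$, a $0$-cocycle $\psi$ on $C_1\otimes C_2$ that vanishes on $D(s)$ and evaluates to $1$ on the generator of $H_0$.

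The candidate is a product $\psi_1\otimes\psi_2$ of staircase cocycles. Fix a vertex $p_m$; then the set $D_m=\{x: p_m+x\in D(s)\}$ is down-closed and misses $S(K_2)$. On a staircase, for any down-closed region avoiding $S$ I can write down explicitly a cocycle that evaluates to $1$ on every $w_n$ and vanishes on the region. It puts weight on grading-$0$ generators and their $U$-translates lying strictly outside the region, and it must be checked against the differentials $\partial r=w_n+w_{n+1}$. I then want to assemble these cocycles over $m$, equivalently filter by the first factor, into a single cocycle on the tensor product.

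I expect this assembly to be the main obstacle. The region $D(s)$ is not a product region, so the naive $\psi_1\otimes\psi_2$ may fail to vanish on $D(s)$. Grading-$0$ elements such as $U\,r\otimes r'$ also appear in the tensor product and must be handled. My first attempt will be the "corner" description of the staircase. Every element of $C_1$ is a sum of translates of $z_m$ and $r_m$, and the horizontal and vertical arrows of a staircase can be contracted filtered-wise one at a time. If that fails, I will instead use the known fact that $C_1\otimes C_2$ is filtered homotopy equivalent to a staircase plus acyclic boxes, at least as far as down-closed surjectivity tests are concerned, with staircase vertices among $S(K_1)+S(K_2)$.

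Granting the claim, $\beta_{K_1\#K_2}=\min_{m,n}\max\big(f_{t^1}(p_m+q_n)-a,\ f_{t^2}(p_m+q_n)\big)$. The last step is an elementary optimization. Fix an optimal pair $(m,n)$ and write $u_1=f_{t^1}(p_m)$, $v_1=f_{t^2}(p_m)$, and similarly $u_2,v_2$ for $q_n$. Choose $a_1=u_1-v_1+\lambda$ and $a_2=u_2-v_2-\lambda$, where $\lambda$ is chosen so that $a_1+a_2=a$, i.e.\ $\lambda$ splits the excess $a-(u_1-v_1)-(u_2-v_2)$. This choice balances each summand so that
\[
\max(u_1-a_1,v_1)+\max(u_2-a_2,v_2)=\max(u_1+u_2-a,\ v_1+v_2).
\]
By Proposition \ref{5.1} each $\beta_{K_i}(t^1,a_i,t^2)$ is at most its term at the chosen vertex, so the right-hand side of the lemma is at most $\beta_{K_1\#K_2}$. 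This gives $\ge$ and completes the plan.
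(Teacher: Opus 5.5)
Your architecture is sound: $\le$ comes from the subadditivity lemma, and $\ge$ comes from the formula $\beta_{K_1\#K_2}(t^1,a,t^2)=\min_{m,n}\max\bigl(f_{t^1}(p_m+q_n)-a,\ f_{t^2}(p_m+q_n)\bigr)$ plus a balanced splitting of $a$. This formula is exactly what the paper uses too. The paper then gets both inequalities at once from the identity $\max(X_1+Y_1-a,X_2+Y_2)=\min_{a_1+a_2=a}\bigl(\max(X_1-a_1,X_2)+\max(Y_1-a_2,Y_2)\bigr)$ and an exchange of minima, instead of invoking subadditivity.

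The gap is that you never establish the formula. The direction ``surjective $\Rightarrow$ some $p_m+q_n\in D(s)$'' is left as a plan. Its central step is assembling region-dependent cocycles on $C_1\otimes C_2$ and handling the grading-$0$ elements $U\,r\otimes r'$, and you yourself expect that step to fail. Your fallback is an unreferenced ``known fact''. In the qualified form you state (``as far as down-closed surjectivity tests are concerned'') it is simply the claim being proved. So the one step that carries the content of the lemma is missing.

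The missing idea is to use the Maslov grading, which makes everything independent of the region.
\begin{itemize}
\item Over $\mathbb{F}$, the grading-$0$ part of one staircase is spanned by the $z_m$ alone. No translate $U^kz_m$ with $k\neq0$ lies in grading $0$, so there is nothing else to put weight on.
\item The grading-$0$ part of $C_1\otimes C_2$ is spanned by the $x_m\otimes y_p$ together with the $U(r_m\otimes r'_p)$.
\end{itemize}
Let $\psi$ be $1$ on every $x_m\otimes y_p$ and $0$ on every $U(r_m\otimes r'_p)$. This is just the naive product of the all-ones cocycles on the two staircases.

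The only grading-$1$ generators are $x_m\otimes r'_p$ and $r_m\otimes y_p$. Their boundaries are $x_m\otimes(y_p+y_{p+1})$ and $(x_m+x_{m+1})\otimes y_p$, and $\psi$ kills both. So $\psi$ is a cocycle taking the value $1$ on the generator of $H_0$. Since $D(s)$ is spanned by basis elements, $\psi$ vanishes on the grading-$0$ part of $D(s)$ whenever no $x_m\otimes y_p$ lies in $D(s)$. That is exactly what your duality argument needs.

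A more direct equivalent: look at the coefficient of $U(x_1\otimes r'_p)$ in the boundary of a grading-$0$ chain, then induct along the first staircase. This shows a grading-$0$ cycle contains no $U(r_m\otimes r'_p)$ terms. Hence it equals $\sum c_{m,p}\,x_m\otimes y_p$ and represents $\sum c_{m,p}$ times the generator.

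Finally, your splitting needs correcting. With $a_1=u_1-v_1+\lambda$ and $a_2=u_2-v_2-\lambda$, the sum $a_1+a_2$ does not depend on $\lambda$. Instead set $e=a-(u_1-v_1)-(u_2-v_2)$ and $a_i=u_i-v_i+e/2$, as the paper does. Then both shifts have the sign of $e$, and your displayed equality holds.
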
 

\begin{proof}
  For any $X_1,X_2,Y_1,Y_2 \in \mathbb{R}$ and any $a \in \mathbb{R}$, we want to show $$\max(X_1 + Y_1 - a, \, X_2 + Y_2) = \min_{a_1 + a_2 = a} \left( \max(X_1 - a_1, X_2) + \max(Y_1 - a_2, Y_2) \right).$$ Here we take the minimum over all $a_1,a_2 \in \mathbb{R}$ with $a_1+a_2 = a.$

  We let $\Delta_1 = X_1 - X_2$ and $\Delta_2 = Y_1 - Y_2$. Define $h_1(a_1) = \max(X_1 - a_1, X_2)$ and $h_2(a_2) = \max(Y_1 - a_2, Y_2)$. We note that $h_1(a_1) \ge X_1 - a_1$ and $h_1(a_1) \ge X_2$ for all $a_1 \in \mathbb{R}$, with $h_1(a_1) = X_1 - a_1$ when $a_1 \le \Delta_1$, and $h_1(a_1) = X_2$ when $a_1 \ge \Delta_1$. This is true likewise for $h_2$.

  We now divide our proof into two cases $a \le \Delta_1 + \Delta_2$ and $a > \Delta_1 + \Delta_2$. 

  In the case where $a \le \Delta_1 + \Delta_2$, we have $\max(X_1 + Y_1 - a, X_2 + Y_2) = X_1 + Y_1 - a$. Suppose we set our parameters $$a_1 = \Delta_1 - \frac{(\Delta_1 + \Delta_2) - a}{2}, \quad a_2 = \Delta_2 - \frac{(\Delta_1 + \Delta_2) - a}{2}.$$ Summing these gives $a_1 + a_2 = \Delta_1 + \Delta_2 - ((\Delta_1 + \Delta_2) - a) = a$. Since $(\Delta_1 + \Delta_2) - a \ge 0$, we obtain $a_1 \le \Delta_1$ and $a_2 \le \Delta_2$. Then $$h_1(a_1) + h_2(a_2) = (X_1 - a_1) + (Y_1 - a_2) = X_1 + Y_1 - (a_1 + a_2) = X_1 + Y_1 - a.$$ Because $h_1(a_1) + h_2(a_2) \ge (X_1 - a_1) + (Y_1 - a_2) = X_1 + Y_1 - a$ everywhere, this gives us the global minimum.

  In the area where $a > \Delta_1 + \Delta_2$, $\max(X_1 + Y_1 - a, X_2 + Y_2) = X_2 + Y_2$. Here we set $$a_1 = \Delta_1 + \frac{a - (\Delta_1 + \Delta_2)}{2}, \quad a_2 = \Delta_2 + \frac{a - (\Delta_1 + \Delta_2)}{2}$$ and again note $a_1+a_2=a$. Since $a - (\Delta_1 + \Delta_2) > 0$, we get $a_1 > \Delta_1$ and $a_2 > \Delta_2$. We also obtain $$h_1(a_1) + h_2(a_2) = X_2 + Y_2.$$ Finally, since $h_1(a_1) \ge X_2$ and $h_2(a_2) \ge Y_2$ everywhere, $h_1(a_1) + h_2(a_2) \ge X_2 + Y_2$ for all $a_1, a_2$, giving us the global minimum. 

  Let $C_1$ and $C_2$ be the positive staircase complexes associated to $K_1$ and $K_2$. Because they are filtered chain homotopy equivalent to $CFK^\infty (K_1)$ and $CFK^\infty(K_2)$ respectively, we have that their tensor $$C_1 \otimes_{F[U,U^{-1}]}C_2$$ is a filtered chain model for $CFK^\infty (K_1 \# K_2)$. Then by Lemma \ref{thm:concordance invaraint}, we must simply compute $\beta$ on the tensor product. 

  We write the Maslov grading zero cycle generators of $C_1$ as $x_1,...,x_k$ and those of $C_2$ as $y_1,..,y_\ell$. Every Maslov grading zero cycle which represents the nonzero $H_0$-class can be replaced without increasing either filtration by a linear combination of the tensors $x_m \otimes y_p$ for $1 \leq m \leq k$ and $1 \leq p \leq \ell$. Then one such tensor realizes the minimum. Since filtration is additive $f_t(x_m \otimes y_p) = f_t(x_m) + f_t(y_p)$. That means every Maslov grading zero cycle in the tensor can be written as $z = \sum_{m,p}c_{m,p} (x_m \otimes y_m)$ for $c_{m,p} \in \mathbb{F}.$ Because all our $x_m$ represent the same nonzero class in $H_0(C_1)$ and all of our $y_p$ represent the same nonzero class in $H_0(C_2),$ every $x_m \otimes y_p$ must represent the same generator of $H_0(C_1 \otimes C_2) \cong \mathbb{F}.$ That means that $z$ represents a nonzero homology class exactly when $\sum_{m,p} c_{m,p}=1$ in $\mathbb{F}.$ 

  We also have $f_t(z) = \max_{c_{m,p}\neq 0} (f_t(x_m) + f_t(y_p))$. This means that if $z$ represents a nonzero homology class, then the $x_m \otimes y_p$ appearing in $z$ has a filtration which is no greater than $z$. Thus, the minimum $\beta$ can be achieved by $x_m \otimes y_p$. This gives us $$\beta_{K_1 \# K_2}(t^1,a,t^2) = \min_{m,p} \max (f_{t^1}(x_m) + f_{t^1}(y_p) -a, f_{t^2}(x_m)+ f_{t_2}(y_p))$$ due to Proposition \ref{5.1}. Due to the identity proven above, we obtain $$\beta_{K_1 \# K_2}(t^1,a,t^2)=\min_{m,p}\min_{a_1+a_2=a} [\max(f_{t^1}(x_m)-a_1,f_{t^2}(x_m)+\max(f_{t^1}(y_p)-a_2, f_{t^2}(y_p))].$$ We now note that the minima over $m$ and $p$ are independent. Additionally by Propostion \ref{5.1}, the two mimnia are $\beta_{K_1}(t^1,a_1,t^2)$ and $\beta_{K_2}(t^1,a_2,t^2)$. Thus we have $$\beta_{K_1 \# K_2}(t^1, a, t^2) = \min_{a_1 + a_2 = a} \left( \beta_{K_1}(t^1, a_1, t^2) + \beta_{K_2}(t^1, a_2, t^2) \right).$$
  %We now apply this property to $$\beta_K(t^1, a, t^2) = \min_{1 \le m \le k} \max \Big( f_{t^1}(i_m, j_m) - a, \;\; f_{t^2}(i_m, j_m) \Big)$$ proven in Proposition 5.1. $$\beta_{K_1 \# K_2}(t^1, a, t^2) = \min_{m, p} \min_{a_1 + a_2 = a} \left( \max(f_{t^1}(x_m) - a_1, f_{t^2}(x_m)) + \max(f_{t^1}(y_p) - a_2, f_{t^2}(y_p)) \right)$$
%$$\implies \beta_{K_1 \# K_2}(t^1, a, t^2) = \min_{a_1 + a_2 = a} \left( \min_m \max(f_{t^1}(x_m) - a_1, f_{t^2}(x_m)) + \min_p \max(f_{t^1}(y_p) - a_2, f_{t^2}(y_p)) \right)$$

%This means $$\beta_{K_1 \# K_2}(t^1, a, t^2) = \min_{a_1 + a_2 = a} \left( \beta_{K_1}(t^1, a_1, t^2) + \beta_{K_2}(t^1, a_2, t^2) \right).$$

\end{proof}

\section{Example of $\beta$ }

We now return to the example of $T(2,5)$ to see the necessity of the $\beta$ invariant in distinguishing L-space knots. First, let us consider the genus 2 knot seen in Figure $\ref{fig:3}.$ 

\begin{figure}
    \centering
    \includegraphics[width=0.3
    \linewidth]{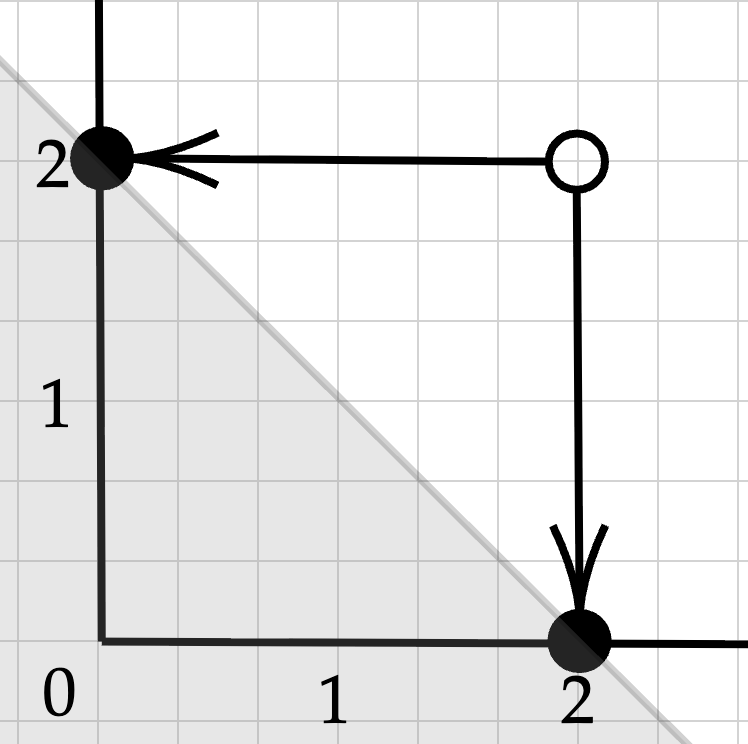}
    \caption{$CFK^\infty$ diagram of Genus 2, we call this $A$}
    \label{fig:3}
\end{figure}

Here, the filtration vertex which generates the minimal $s$ from time $0$ to $1$ is $(0,2)$, and from $1$ to $2$ it is $(2,0)$. Thus 
\[
\gamma(t) = \begin{cases} 
      t & 0\leq t \leq 1 \\
2-t &  1 < t \leq 2
   \end{cases}
\]

which implies 

\[
\Upsilon(t) = \begin{cases} 
      -2t & 0\leq t \leq 1 \\
-4+2t &  1< t \leq 2
   \end{cases}
\]

Notice this is the exact same $\Upsilon$ as $T(2,5)$. Thus, $\Upsilon$ is unable to distinguish these two staircase $CFK^\infty$ complexes.

\begin{figure}
    \centering
    \includegraphics[width=0.3\linewidth]{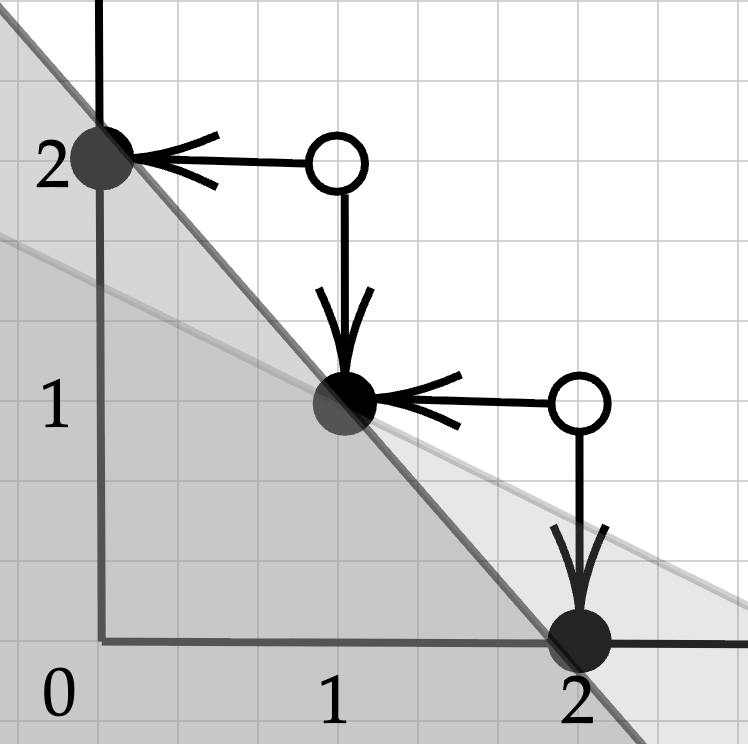}
    \caption{$CFK^\infty (T_{2,5})$ with two Filtration Lines}
    \label{fig:2,5-double}
\end{figure}

However, by using $\beta$, we see that for the two filtration lines corresponding to $t^1=0.9$  (the darker line) and $t^2=1.6$ (the lighter line), studying $\beta(0.9,a,1.6)$ allows us to detect all of the cycles of $T(2,5)$. 

Now we use the formula we derived for computing $s^2$ based on $a$ in Proposition \ref{5.1}.

Utilizing this formula, we obtain that for $T(2,5)$, $$\beta_{T(2,5)}(0.9,a,1.6)= \begin{cases}
0.9-a & \text{if } a < -0.7,\\
1.6  & \text{if } a \in [-0.7, -0.6),\\
1-a & \text{if } a \in [-0.6, 0),\\
1 & \text{if } a \in [0, 0.1),\\
1.1-a  & \text{if } a \in [0.1, 0.7),\\
0.4  & \text{if } a \geq 0.7.
\end{cases}$$
\begin{figure}
    \centering
    \includegraphics[width=0.5\linewidth]{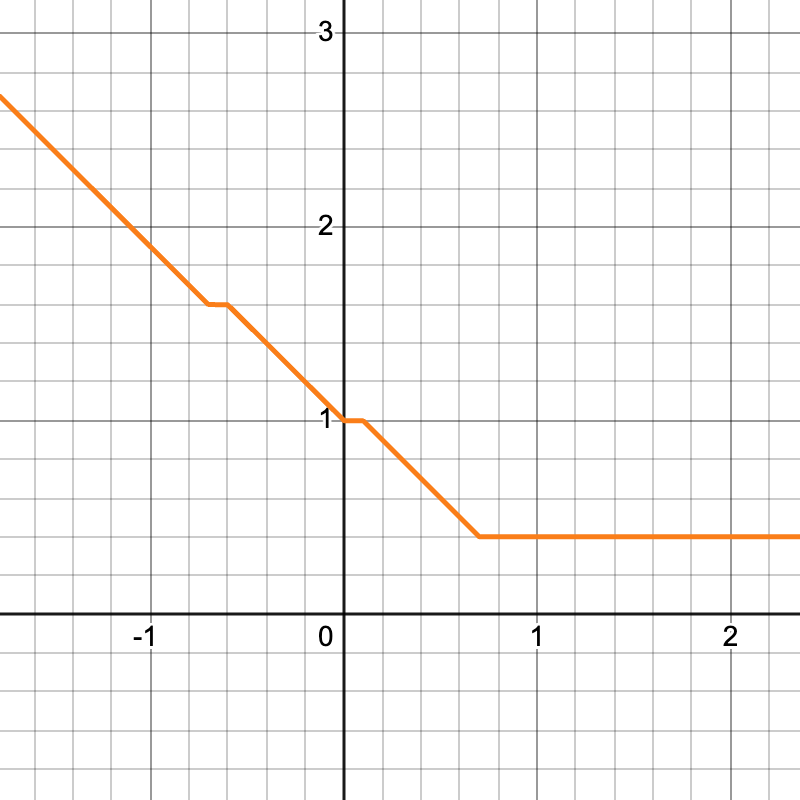}
,    \caption{$\beta_{T(2,5)}(0.9,a,1.6)$, generated using Desmos}
    \label{fig:placeholder}
\end{figure}

For the $CFK^\infty$ shown in Figure \ref{fig:3}, we instead obtain $$\beta_{A}(0.9,a,1.6)= \begin{cases}
0.9-a & \text{if } a \leq -0.7,\\
1.6  & \text{if } a \in [-0.7, -0.5),\\
1.1-a  & \text{if } a \in [-0.5, 0.7),\\
0.4  & \text{if } a \geq 0.7.
\end{cases}$$

Thus, while $\Upsilon$ is unable to distinguish $T(2,5)$ and $A$, $\beta$ is able to.

\begin{thm}
    Two positive L-space knots have the same $\beta$ if and only if they have the same Alexander polynomial. 
\end{thm}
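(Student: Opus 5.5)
The plan is to prove the two directions separately. The forward direction is formal. The converse amounts to reading off the staircase $S(K)$ from $\beta_K$, using the explicit formula of Proposition \ref{5.1} at a well-chosen pair $(t^1,t^2)$.

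\emph{($\Leftarrow$)} If $\Delta_{K_1}=\Delta_{K_2}$, then by the remark after the definition of positive staircase complexes (\cite{ozsvath2005knot}, \cite{Krcatovich2018}), $CFK^\infty(K_1)$ and $CFK^\infty(K_2)$ are filtered chain homotopy equivalent to the same staircase complex. The argument in Lemma \ref{thm:concordance invaraint} shows that $\beta$ depends only on the filtered chain homotopy type. Hence $\beta_{K_1}=\beta_{K_2}$. Alternatively, Proposition \ref{5.1} expresses $\beta$ purely in terms of $S(K)$, which is determined by $\Delta_K$.

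\emph{($\Rightarrow$)} Suppose $\beta_{K_1}\equiv\beta_{K_2}$. Because $S(K)$ determines the staircase and hence $\Delta_K$, it suffices to show $S(K_1)=S(K_2)$.

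\textbf{Step 1: choose the directions.} Let $g$ bound the genera, i.e. all coordinates of points in $S(K_1)\cup S(K_2)$. Fix $t^1=\varepsilon$ and $t^2=2-\varepsilon$ with $\varepsilon>0$ small compared to $1/g$. For a staircase point $p_m=(i_m,j_m)$ set $u_m=f_{t^1}(p_m)$ and $v_m=f_{t^2}(p_m)$. The map $(i,j)\mapsto(f_{t^1},f_{t^2})$ is linear with determinant $(t^2-t^1)/2\neq 0$, so it is injective. Moreover $u_m=i_m+O(\varepsilon g)$ and $v_m=j_m+O(\varepsilon g)$. Since the $i_m$ strictly increase and the $j_m$ strictly decrease by integer steps, the $u_m$ strictly increase and the $v_m$ strictly decrease for both knots. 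Thus no point $(u_m,v_m)$ is dominated coordinatewise by another.

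\textbf{Step 2: recover the points from $a\mapsto\beta(t^1,a,t^2)$.} By Proposition \ref{5.1}, $\beta(a)=\min_m\max(u_m-a,v_m)$. For each level $s$,
\[
\{a:\beta(a)\le s\}=\bigcup_{m:\,v_m\le s}[\,u_m-s,\infty),
\]
so $\beta$ determines the step function
\[
h(s)=\min\{u_m : v_m\le s\}.
\]
Because the points $(u_m,v_m)$ form an antichain, with $u$ strictly decreasing as $v$ decreases, $h$ jumps exactly at $s=v_m$, taking value $u_m$ immediately after the jump. The jump locations and values therefore recover the full set $\{(u_m,v_m)\}$. Inverting the linear map then recovers $S(K)$. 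Since $\beta_{K_1}=\beta_{K_2}$ at this $(t^1,t^2)$, we conclude $S(K_1)=S(K_2)$.

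\textbf{Main obstacle.} The crux is Step 2 together with the choice in Step 1. For a general pair $(t^1,t^2)$, some cycles may be dominated and become invisible in $\beta$. The example with $A$ and $T(2,5)$ at $(0.9,1.6)$ shows different breakpoint patterns, but the fixed pair $(0.9,1.6)$ is not guaranteed to see every corner. So the proof must allow $(t^1,t^2)$ to depend on the pair of knots. This is legitimate, since the hypothesis gives equality of $\beta$ at all parameters. If one prefers a uniform choice, the fallback is to let $t^1\to0$ and $t^2\to2$ and argue by continuity of the piecewise linear data. I would also double-check that boundary cases, where a jump of $h$ coincides with a breakpoint coming from a different point, cannot occur once the points form a strict antichain.
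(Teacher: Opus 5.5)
Your proof is correct. It follows the same basic strategy as the paper: push $t^1$ toward $0$ and $t^2$ toward $2$ so every staircase corner becomes visible, then decode $a\mapsto\beta$ using Proposition~\ref{5.1}. The execution differs, though.

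The paper passes to the limit $\phi_K(a)=\min_m\max(i_m-a,j_m)$. It then reads off $S(K)$ from the points $a_m=i_m-j_m$ where the slope of $\phi_K$ changes from $-1$ to $0$, using $\phi_K(a_m)=j_m$.

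You stay at a fixed pair $(\varepsilon,2-\varepsilon)$ inside the domain of $\beta$, and you make the non-domination (antichain) condition explicit through the genus bound. You then decode via sublevel sets, recovering $h(s)=\min\{u_m : v_m\le s\}$, and invert the linear map.

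What your version buys:
\begin{itemize}
\item A slightly sharper statement: the single slice $a\mapsto\beta(\varepsilon,a,2-\varepsilon)$ with $\varepsilon<1/g$ already determines $S(K)$.
\item No limiting argument outside the domain where $\beta$ is defined.
\item A transparent explanation of why a general pair $(t^1,t^2)$ can fail: dominated vertices are invisible.
\item The sublevel-set decoding automatically gives the fact, which the paper asserts without argument, that no corners of type $-1\to 0$ occur other than the $a_m$.
\end{itemize}

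What the paper's limit buys: it is uniform in the knots, with no genus-dependent choice of $\varepsilon$, and it outputs the integer coordinates $(i_m,j_m)$ directly.

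Your closing worry about boundary cases is unnecessary. Since $u_m$ strictly increases and $v_m$ strictly decreases in $m$, you have $h(s)=u_{m_0(s)}$ with $m_0(s)=\min\{m : v_m\le s\}$. So $h$ jumps exactly at each $v_m$, to the value $u_m$, and nothing else can interfere.
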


\begin{proof}
We attempt to show that if $K_1$ and $K_2$ are two positive L-space knots, then $\beta_{K_1}(t^1,a,t^2)  =\beta_{K_2}(t^1,a,t^2) $ if and only if $S(K_1) = S(K_2)$, which indicates that they have the same Alexander polynomial.
If $S(K_1) = S(K_2)$, then by Proposition \ref{5.1}, $\beta(K_1) = \beta(K_2)$. 

We now prove the converse. If we take the filtration function $f_t(i, j) = (1 - \frac{t}{2})i + \frac{t}{2}j$ and study it at the limits $t^1 \to 0^+$ and $t^2 \to 2^-$ respectively we get $f_{t^1}(i, j) \to i \quad \text{and} \quad f_{t^2}(i, j) \to j$. When we pass to this limit in the formula $\beta_K(t^1, a, t^2) = \min_{1 \le m \le k} \max \Big( f_{t^1}(i_m, j_m) - a, \;\; f_{t^2}(i_m, j_m) \Big)$, we get $$\phi_K(a) := \lim_{\substack{t^1 \to 0^+ \\ t^2 \to 2^-}} \beta_K(t^1, a, t^2) = \min_{1 \le m \le k} \max \left( i_m - a, \; j_m \right).$$
If $\beta_{K_1} \equiv \beta_{K_2}$, then $\phi_{K_1}(a) = \phi_{K_2}(a)$ for all $a \in \mathbb{R}$. 

For all $(i_m, j_m) \in S(K)$, we define $h_m(a) = \max(i_m - a, j_m)$. For $a \le i_m - j_m$, $h_m(a) = i_m - a$ and for $a \ge i_m - j_m$, $h_m(a) = j_m$. The non-differentiable corner of $h_m(a)$ happens when $a_m = i_m - j_m$, where $h_m(a_m) = j_m$.

We then order the coordinates in $S(K)$ using our lexicographical order. Because $i_{m+1} > i_m$ and $j_{m+1} < j_m$, the transition points $a_m = i_m - j_m$ strictly increase
$$a_{m+1} - a_m = (i_{m+1} - i_m) + (j_m - j_{m+1}) \ge 1 + 1 = 2.$$ Thus, $a_1 < a_2 < \dots < a_k$. At $a = a_m$, we can now evaluate $h_{m'}(a_m)$ for all $m' \neq m$. For $m' < m$, because $a_m > a_{m'}$, $h_{m'}(a_m) = j_{m'}$ and because $j_{m'} > j_m$, $h_{m'}(a_m) > j_m$. For $m' > m$ because $a_m < a_{m'}$, $h_{m'}(a_m) = i_{m'} - a_m = i_{m'} - i_m + j_m$ and because $i_{m'} > i_m$, $h_{m'}(a_m) \ge j_m + 1 > j_m$. Thus we have $\phi_K(a_m) = \min_{m'} h_{m'}(a_m) = h_m(a_m) = j_m$. In a small neighborhood $(a_m - \epsilon, a_m + \epsilon)$, $h_m(a)$ achieves a strict minimum, so $\phi_K(a)$ changes its derivative from $-1$ to $0$ at $a_m$. That means every cycle vertex $(i_m, j_m) \in S(K)$ produces a point at which the one sided derivatives changes from $-1$ to $0$ in the function $\phi_K(a)$.

Now using $\phi_K(a)$, we give a method to find $S(K)$. We find every value $a^* \in \mathbb{R}$ where the derivative of $\phi_K(a)$ changes from $-1$ to 0. This set of corner points is $\{a_1, a_2, \dots, a_k\}$. Then for each $a_m$, find $y_m = \phi_K(a_m)$ which gives Alexander filtration $j_m=y_m$. Finally, compute the algebraic filtration $i_m = a_m + y_m$. Since $S(K) = \{(a_m + \phi_K(a_m), \phi_K(a_m)) \mid a_m \text{ is a corner of } \phi_K\}$, $S(K)$ is fully determined by $\phi_K(a)$. Thus if $\beta_{K_1} \equiv \beta_{K_2}$, then $\phi_{K_1} \equiv \phi_{K_2}$, which implies $S(K_1) = S(K_2)$.   
\end{proof}
Therefore, $\beta$ captures more information than $\Upsilon$.

% Adjust the point value as needed
%This immediately provides us with a corollary:

%\begin{corollary}
%Two L-space knots with different Alexander polynomials cannot be concordant.
%\end{corollary}

%Using $\beta$, we know they will have different $\beta$ invariants, and since $\beta$ is a concordance invariant, they cannot be concordant. Note that this is true even if both knots share the same $\Upsilon$ invariant. 

%We also note that if we have two positive L-space knots with different Alexander polynomials and $K_1 \# -K_2 \sim 0$, then this implies $K_1 \sim K_2$. However from Corollary 1, this is impossible, so as a result, for two L-space knots $K_1,K_2$ with different Alexander polynomial, $K_1 \# -K_2 \nsim 0$.

%In a recent paper \cite{Teragaito2025Hyperbolic}, Teragaito identifies infinitely many pairs of hyperbolic L-space knots $K_1$ and $K_2$ that have distinct Alexander polynomials but share the same $\Upsilon$. Thus while $\Upsilon(K_1 \# -K_2 ) = 0$, we know using $\beta$ that $K_1 \nsim K_2$ and thus $K_1 \# -K_2 \nsim 0$. 

%Thus, $\beta$ allows us to track the cycles of L-space knots, but it can only ever capture an L-space knot's amount of information. Thus, under $\beta$, for example, if we connect sum many positive L-space knots, it will only be able to capture the bottom L-space envelope. 

\section{Concordance Results}

In \cite{Teragaito2025Hyperbolic}, Teragaito constructs infinitely many pairs of L-space knots which have distinct Alexander polynomials, but share the same $\Upsilon$ invariant.  Teragaito's knots of type $K_{1,n}$ are defined using the diagrams in Figure \ref{fig:teresknots}. The Alexander polynomial of $K_{1,n}$ is 
    \begin{multline}
    \sum_{i=0}^{n} \left( t^{8n+12+4i} - t^{8n+11+4i} \right) + \left( t^{8n+9} - t^{8n+8} \right) + \\\sum_{i=0}^{n} \left( t^{4n+6+4i} - t^{4n+4+4i} \right) + \left( t^{4n+3} - t^{4n+1} \right) +  \sum_{i=0}^{n-1} \left( t^{4+4i} - t^{1+4i} \right) + 1
\end{multline} and the Alexander polynomial of $K_{2,n}$ is
\begin{multline}
\sum_{i=0}^{n} \left( t^{8n+12+4i} - t^{8n+11+4i} \right) + \left( t^{8n+9} - t^{8n+8} \right) + \\ \sum_{i=0}^{2n-1} \left( t^{4n+8+2i} - t^{4n+7+2i} \right) + \left( t^{4n+6} - t^{4n+4} \right) + \left( t^{4n+3} - t^{4n+1} \right) + \sum_{i=0}^{n-1} \left( t^{4+4i} - t^{1+4i} \right) + 1
\end{multline} In \cite{Teragaito2025Hyperbolic}, it is shown that $\Upsilon(K_{1,n}) = \Upsilon(K_{2,n})$.

\begin{proof}[Proof of Theorem \ref{thm:teragaito}]

We notice that knots $K_{1,n}$ and $K_{2,n}$ are of genus ${(8n+12+4n)/2=6n+6}$. The method for constructing the $CFK^\infty$ structure from the Alexander polynomial is by $$\frac{\Delta_K(t) - 1}{t - 1} = \sum_{b \in G} t^b.$$ Passing through a gap $b \in G$ then corresponds to moving down ($-j$ direction) by one vertical step in the $CFK^\infty$ staircase. Then the semigroup $ \mathbb{N}_0 \setminus G$ contains all non-gap exponents, found by the power series $$\frac{\Delta_K(t)}{1 - t} = \sum_{s \in S_K} t^s.$$ Passing through a non-gap corresponds to moving to the right ($+i$ direction) by one unit. By this reasoning, the staircases for $K_{1,n}$ and $K_{2,n}$ will be the same everywhere except where their Alexander polynomials differ. 

We notice that the summands which differ in the two Alexander polynomials are $\sum_{i=0}^{n} \left( t^{4n+6+4i} - t^{4n+4+4i}\right)$ in $K_{1,n}$ and $\sum_{i=0}^{2n-1} \left( t^{4n+8+2i} - t^{4n+7+2i} \right)$ in $K_{2,n}$. Dividing $(t^{4n+6+4i} - t^{4n+4+4i})$ by $(t-1)$ yields $t^{4n+4+4i} + t^{4n+5+4i}$. This creates blocks of 2 consecutive gaps followed by blocks of 2 consecutive non-gaps for $K_{1,n}$. Dividing $(t^{4n+8+2i} - t^{4n+7+2i})$ by $(t-1)$ yields $t^{4n+7+2i}$. This creates single isolated gaps alternating with single isolated non-gaps for $K_{2,n}.$

In Lemma 4.2 of \cite{Teragaito2025Hyperbolic}, we are provided with a function $f(x)$, whose Legendre transform is $\Upsilon$ for both $K_{1,n}$ and $K_{2,n}$. This method is developed in \cite{borodzik2018upsilon}. By taking the Legendre transforms, we obtain $$\Upsilon_n(t) = \begin{cases}  (-6n - 6)t & \text{for } 0 \le t < \frac{1}{2}, \\ (-2n - 6)t - 2n & \text{for } \frac{1}{2} \le t < \frac{2}{3}, \\ -2nt - 2n - 4 & \text{for } \frac{2}{3} \le t < 1, \\ 2nt - 6n - 4 & \text{for } 1 \le t <\frac{4}{3}, \\ (2n + 6)t - 6n - 12 & \text{for } \frac{4}{3} \le t < \frac{3}{2}, \\ (6n + 6)t - 12n - 12 & \text{for } \frac{3}{2} \le t \le 2. \end{cases}.$$

Suppose for contradiction $$K^{(1)}:= \#_n c_n K_{1,n} \sim K^{(2)}:= \#_m d_m K_{2,m}$$ for $c_n,d_m \in \mathbb{Z}_{\geq 0}$ and that both connect sums are nontrivial. Then by Lemma \ref{thm:concordance invaraint}, $\beta_{K^{(1)}}(t^1,a,t^2) = \beta_{K^{(2)}}(t^1,a,t^2)$ for all $t^1,t^2 \in (0,2)$ and $a \in \mathbb{R}$. Let us set $t_1 = 1/2$ and $t_2 = 1$. We compare $\beta_K(1/2,a,1)$ at parameters $a$ which we determine by the $f_{2,3}$ minimizing cycle vertices.

For any cycle vertex $x = (i,j)$, $f_{1/2}(x) = \frac{3}{4}i + \frac{1}{4}j$ and $f_1(x) = \frac{1}{2}i+\frac{1}{2}j.$ Thus $f_{1/2}(x) - f_{1}(x) = \frac{1}{4}(i-j)$. Additionally $f_{2/3} = \frac{2}{3}f_{1/2}+ \frac{1}{3}f_1.$ Note that the singularity of $\Upsilon_{K_{i,n}}$ at $t=2/3$ means the corresponding staircase segments have the cycles which lay on the $f_{2/3}$ minimizing face.

Now suppose $C$ is some connected sum of positive L-space knots, and let $$L_C = \min \{ f_{2/3}(x) :\text{ } x\text{ is a cycle representing the nonzero class} \}.$$ If $p= (i,j)$ is a $f_{2/3}$ minimizing cycle vertex, we set $a_p = \frac{1}{4}(i-j).$ Then since $f_{1/2}(x) - f_{1}(x) = \frac{1}{4}(i-j)$, $f_{1/2}(p) - a_p = f_1(p).$ Thus for any cycle vertex $x$, $\max \{f_{1/2} (x) - a_p, f_1(x)\} \geq \frac{2}{3}(f_{1/2}(x) -a_p) + \frac{1}{3}f_1(x) = f_{2/3}(x) - \frac{2a_p}{3} \geq L_C - \frac{2a_p}{3}. $ We note for $x=p$, we have equality because $f_{1/2}(p)-a_p = f_1(p) = L_C - \frac{2a_p}{3}$. Thus we have by Proposition \ref{5.1} that $$\beta_C(\frac{1}{2},a_p,1) = L_C - \frac{2a_p}{3}.$$ 

Conversely, if we suppose $\beta_C(\frac{1}{2},a_p,1) = L_C - \frac{2a_p}{3}$, the inequalities above give that any cycle which realizes the minimum satisfies $f_{2/3}(x) = L_C$. The equality holds in the maximum, as required by Proposition \ref{5.1}, so $f_{1/2}-a = f_1$ meaning $a = (i-j)/4$ for some $f_{2/3}$ minimizing cycle vertex $(i,j)$. Thus the set $$\big\{ a \in \mathbb{R}: \beta_C (\frac{1}{2},a,1) = L_C - \frac{2a}{3}\big\}$$ is exactly the set of $(i-j)/4$ which are obtained from the $f_{2/3}$ minimizing cycle vertices of C. 

We use an analogous characterization for $f_1$, where we replace $f_{2/3}$ by $f_1$ and set $t_1=1/2$ and $t_2 = 3/2$. Take $M_C$ to be the $f_1$ minimum. The same argument gives us that $\beta_C (1/2,a,3/2) = M_C - a/2$. This occurs exactly for the values $(i-j)/2$ coming from the $f_1$ minimizing cycle vertices of $C$. By the Lemma 4 connected sum formula for $\beta$, we have equality when \(a\) is the sum of the corresponding \((i-j)/2\) values from \(f_1\) minimizing vertices in the individual summands.

We now use the ordering $\succ$ defined above to find the first difference between two staircases. The staircases $K_{1,n}$ and $K_{2,n}$ agree until the region where their Alexander polynomials differ, and they agree again after this region. In the differing region, $K_{1,n}$ has 2 by 2 steps, whereas $K_{2,n}$ has 1 by 1 steps. 

For each summand, we let $p_r=(i_r,j_r)$ be the cycle vertex immediately before the first different step, with the choice of vertex made by $>_{\text{lex}}$. Then the next cycle vertex in the $K_{1,n}$ staircase is $p_r + (2,-2)$ and the next cycle vertex in the $K_{2,n}$ staircase is $p_r + (1,-1)$. 

The relevant step vectors in the $K_{1,n}$ staircase are $(1,-2), \text{ } (2,-2),\text{ }(2,-1)$, which come from the common 1 by 2 region, the differing 2 by 2 region, and the common 2 by 1 region. Thus, any displacement obtained by traversing a sequence of steps after $p_r$ is of the form $a(1,-2)+b(2,-2)+c(2,-1),$ for $ a,b,c\in\mathbb Z_{\geq0}$. If its displacement were $(1,-1)$, then its first coordinate would give $a+2b+2c=1.$ Because $a,b,c$ are nonnegative integers, this forces $a=1,$ and $ b=c=0$. The second coordinate is $-2$, not $-1$. Thus no cycle vertex of the $K_{1,n}$ staircase can occur at
displacement $(1,-1)$ from $p_r$.

%For every $K_{1,n}$, the relevant portion of the staircase, meaning the one that differs from $K_{2,n}$, has gap sequence consisting of blocks of two consecutive gaps followed by two consecutive non-gaps. Thus the consecutive cycle vertices in the $f_{2,3}$ minimizing part differ by $(2,-2)$. This means their $i-j$ values differ by 4. Thus all $i-j$ coming from $f_{2/3}$ minimizing cycle vertices of $K_{1,n}$ live in a single congruence class modulo 4. 

%For every $K_{2,n}$, the relevant portion of the staircase has gap sequence of isolated gaps followed by isolated non-gaps. Thus there are consecutive $f_{2/3}$ minimizing cycle vertices which differ by $(1,-1)$. Thus their $i-j$ values differ by 2 so the $f_{2/3}$ minimizing cycle vertices of $K_{2,n}$ give $i-j$ values in at least two distinct congruence classes modulo 4. 

%Under connect sum, the coordinates of tensor products add. Thus the $i-j$ values of $f_{2/3}$ minimizing cycle vertices must also add. Thus we have for $K^{(1)}$ all such $i-j$ values lie in one congruence class modulo 4 and for $K^{(2)}$ we have $f_{2/3}$ minimizing cycle vertices with $i-j$ values which differ by 2. 

We now utilize that the staircases of $K_{1,n}$ and $K_{2,n}$ agree outside the region where their Alexander polynomials differ. Particularly, we pick the first common point $p_r = (i_r,j_r)$ before the differing vertex of each summand where the two staircase patterns differ. In the $K_{1,n}$, the next cycle vertex after $p_r$ is $p_r + (2,-2)$ and in $K_{2,n}$, the next cycle vertex after $p_r$ is $p_r+(1,-1)$. Here we use the order defined above on points.

Now consider $K^{(1)}$ and $K^{(2)}$. In each summand, choose the corresponding vertex $p_r$ as above and let $p = \sum_r p_r = (I,J)$. Because the coordinates of tensor products add, $p$ is the corresponding cycle vertex in every connected sum. The two connected sums have identical staircase data up to $p$, because all of the corresponding summands agree before their first differing region. We note that these connected sums may not in themselves be represented as staircase complexes, as they may not be L-space. 

Because $p$ is the first point at which the differing region begins, the preceding cycle vertices lie above the relevant $f_{1}$ supporting face and thus do not contribute to the $f_{1}$ minimum. This means $p$ is an $f_{1}$ minimizing cycle vertex in both connect sums. 

At $p$, the $K^{(2)}$ staircase has a cycle vertex $q=p+(1,-1)$, but the $K^{(1)}$ staircase has its next cycle vertex at $p+(2,-2)$. By our previous step-vector argument, there is no cycle vertex of $K^{(1)}$ which is at displacement $(1,-1)$ from $p$.

Since $p=(I,J)$ is an $f_{1}$ minimizing cycle vertex, we can set $a_p = \frac{I-J}{2}$. The additional cycle vertex we have on the $K^{(2)}$ side gives us $a_q =\frac12\big((I+1)-(J-1)\big) =a_p+1.$ 

Because $K^{(1)}$ and $K^{(2)}$ are concordant, they share an $\Upsilon$ invariant. Particularly, $$ M_{K^{(1)}}=M_{K^{(2)}}=:M,
$$ since $\Upsilon_K(1)=-2M_K$.  Then by Lemma \ref{thm:concordance invaraint}, we have $$\beta_{K^{(1)}}\left(\frac12,a,\frac{3}{2}\right) = \beta_{K^{(2)}}\left(\frac12,a,\frac{3}{2}\right)$$ for every $a\in\mathbb R$.

Now we take $a = a_q$. Because $q$ is an $f_{1}$ minimizing cycle vertex of $K^{(2)}$, we have $$\beta_{K^{(2)}}\left(\frac12,a_q,\frac32\right) = M-\frac{a_q}{2}.$$

Now suppose that equality also holds for $K^{(1)}$. This would imply $K^{(1)}$ has an
$f_{1}$ minimizing cycle vertex $(i,j)$ such that $\frac{i-j}{2}=a_q,$ meaning $i-j=(I-J)+2.$
This equivalently means that relative to $p$, the corresponding cycle would occur at displacement $(1,-1)$ along the staircase. By the ordering and step-vector argument we have above, no such cycle exists in $K^{(1)}$.  Thus \[
\beta_{K^{(1)}}\left(\frac12,a_q,\frac32\right)
>
L-\frac{a_q}{2}
=
\beta_{K^{(2)}}\left(\frac12,a_q,\frac32\right),
\]
contradicting Lemma \ref{thm:concordance invaraint}. 

\end{proof}

%\pretzelSum*

%\begin{proof}
   % Due to work of Belousov \cite{belousov2025explicitformulasalexanderpolynomial}, we know the pretzel knots we care about have Alexander polynomial of the form $$\Delta_{P(-2, 3, n)}(t) = t^{n+3} - t^{n+2} + \left( \sum_{k=3}^{n} (-1)^{n-k} t^k \right) - t + 1.$$ Thus $P(t) = \frac{\Delta(t) - 1}{t - 1} = t^{n+2} + t^{n-1} + t^{n-3} + t^{n-5} + \dots + t^4 + t^2 + t$ and the set of gaps $G$ which are the exponents of $P(t)$) is explicitly $G = \{1, 2, 4, 6, 8, \dots, n-1, n+2\}$. Thus, using the logic of the previous proof, the first step is of the form right one down two, the last step is of the form right two down one, and all the other steps are of the form right one down one.

  %  For each of these pretzel knots, the steepest step is of the form right one down two. Thus, when we connect sum them, the connect sums of these right one down two steps will be outermost and visible to $\beta$. Additionally, upon connect summing $n$ such knots, we would obtain $n$ such outer steps. If we tensor every knot's vertex of the form $(1,\text{genus}-2)$, we get n steps of this form. $\beta$ can detect all these steps. Thus, iff $\#_n c_nK_n \sim \#_m d_mJ_m$, then the number of such steps must be the same since $\beta$ detects them all, meaning $\sum_n c_n = \sum_m d_m$.
%\end{proof}